\newcommand{\ce}{\mathbb{C}}
\newcommand{\er}{\mathbb{R}}
\newcommand{\ze}{\mathbb{Z}}
\newcommand{\vertiii}[1]{{\left\vert\kern-0.25ex\left\vert\kern-0.25ex\left\vert #1 
		\right\vert\kern-0.25ex\right\vert\kern-0.25ex\right\vert}}
\newcommand{\partere}{\mathrm{Re} }
\newtheorem{Teorema}{Theorem}[section]
\newtheorem{Rem}[Teorema]{Remark}
\newtheorem{Prop}[Teorema]{Proposition}
\newtheorem{Lemma}[Teorema]{Lemma}
\newtheorem{Coro}[Teorema]{Corollary}
\newtheorem{defi}{Definition}
\newtheorem{assumpt}{Assumption}
\newtheoremstyle{mytheoremstyle} 
{\topsep}                    
{\topsep}                    
{}                   
{}                           
{\scshape}                   
{.}                          
{.5em}                       
{}  
\theoremstyle{mytheoremstyle} \newtheorem{nota}{Remark}
\theoremstyle{mytheoremstyle} \newtheorem{exemplo}{Example}
\date{}
\author{Sim\~ao Correia, Filipe Oliveira, Jorge D. Silva}
\title{Mass-transfer instability of ground-states for Hamiltonian Schrödinger systems}
\begin{document}
	\maketitle
	\noindent
	\begin{abstract}
		We study generic semilinear Schrödinger systems which may be written in Hamiltonian form.
		In the presence of a single gauge invariance, the components of a solution may exchange mass between them while preserving the total mass. We exploit this feature to unravel new orbital instability results for ground-states. More precisely, we first derive a general instability criterion and then apply it to some well-known models arising in several physical contexts.
		 In particular, this \textit{mass-transfer instability} allows us to exhibit $L^2$-subcritical unstable ground-states.
		
		\vskip10pt
		\noindent\textbf{Keywords}: semilinear Schrödinger systems; bound-states; orbital instability.
		\vskip10pt
		\noindent\textbf{AMS Subject Classification 2010}: 35Q55, 35C08, 35B35.
	\end{abstract}
	
	\section{Introduction}
		In this paper, we consider general semilinear Schrödinger systems in pseudo-Hamiltonian form
\begin{equation}
\label{sistema}
\mathbf{u}_t=JH'(\mathbf{u})
\end{equation}
where $\mathbf{u}=(u_1,\dots,u_m)$, $u_j\,:\,\er^d\to \ce$, $J=\textrm{diag}\Big(\frac 1{i\lambda_j}\Big)$ with $\lambda_j\in\er$, $\lambda_j\neq 0$ and the Hamiltonian $H$ is of the form
$$H(\mathbf{u})=\frac 12\sum_j \int|\nabla u_j|^2+\frac{1}{2}N(\mathbf{u}),\quad N(\mathbf{u})=\sum_k \int n_k(\mathbf{u}),\quad n_k:\ce^m\to \er \mbox{ homogeneous of degree }\alpha_k.$$
There are many physically relevant models which may be written in this form. Such applications arise in several contexts, such as plasma physics, nonlinear optics or Bose-Einstein condensates, among others. The mathematical theory regarding the scalar case is, by now, well-established, covering local well-posedness, existence and stability of bound-states, global existence vs. finite time blow-up and scattering theory. In the last twenty years, the study of Schrödinger systems has become a very active field of research: on the one hand, the vector case presents a larger array of interesting physical models; on the other hand, one may observe new dynamical features that were not available in the scalar case. However, we believe that these new features have yet to be thoroughly explored.

We shall focus on the stability properties of bound-states, that is, solutions of the form
$$
\mathbf{u}(t)=(e^{i\omega_1 t}Q_1, \dots, e^{i\omega_m t}Q_m).
$$
In order for system \eqref{sistema} to admit bound-state solutions, we will require the existence of a gauge invariance:
\begin{equation}\label{invmass}
H(u_1e^{i\omega_1 t},\dots,u_me^{i\omega_mt })=H(u_1,\dots, u_m),\ \mbox{for all }\mathbf{u}\in (H^1(\er^d))^m,\ t\in \er.
\end{equation}
This condition is usually verified in any physically relevant model, since it is equivalent (see Appendix A) to the conservation of the total mass $$M(\mathbf{u}(t))=\frac 12 \sum_j\int  \lambda_j\omega_j|u_j(t)|^2, \quad \lambda_j\omega_j>0.$$
A direct computation shows that (the profiles of) bound-states, $\mathbf{Q}=(Q_1,\dots, Q_m)$, are precisely the critical points of the action functional
$$
S(\mathbf{u})=M(\mathbf{u})+H(\mathbf{u}).
$$

As in the scalar case, a special attention should be given to the bound-states with minimal action among all bound-states, the so-called ground-states. Indeed, it turns out that these solutions determine many dynamical properties of the full evolution problem. However, it is important to observe that this definition of ground-state is not very useful from a mathematical point of view, since it provides no information on the behavior of the action functional (even locally). For this reason, an important effort has to be made in order to show that ground-states are the solutions to some specific minimization problems (usually minimizing the action on a codimension one manifold in $(H^1(\er^d))^m$). Only then may one derive the numerous interesting properties regarding these solutions. In the general pseudo-Hamiltonian form, it is quite non-trivial to determine a suitable minimization problem: it depends on the specific power of the nonlinearities, their signs and also on the spatial dimension $d$. Since our goal is not to prove such a variational characterization, we shall define the set of minimal bound-states as
$$
\mathcal{B}_0 = \left\{ \mathbf{Q}\neq 0\mbox{ bound-state }: \mathbf{Q} \mbox{ is a local minimum of }S\mbox{ over a manifold }\mathcal{V}\subset(H^1(\er^d))^m\mbox{ of codimension }1  \right\},
$$
and study their instability. Since, in all known cases, minimal bound-states and ground-states coincide, we feel that this definition is in no way harmful to the validity of our work.

When studying the stability of minimal bound-states, it is essential to take into account the gauge and translation invariances. In fact, some simple arguments (see, for example, \cite[Section 8.3]{cazenave}) show that these invariances always induce an unstable behavior. Therefore, one should weaken the notion of stability: a bound-state is said to be \textit{orbitally stable} if, for any given initial data sufficiently close to it, the corresponding solution remains close  \textit{modulo gauge and translation invariances}.

For the nonlinear Schrödinger equation on $\er^d$
\begin{equation}\label{eq:nls}
iu_t + \Delta u + |u|^{p-1}u = 0,
\end{equation}
it is well-known that the ground-state is orbitally stable if and only if $p<4/d$ (corresponding to the $L^2$-subcritical case): on the one hand, if $p\ge 4/d$,  one may use a Virial-type argument to show that finite-time blow-up occurs for some initial datum arbitrarily close to the ground-state; on the other hand, if $p<4/d$, the ground-state can be shown to be (up to phase and translation) the minimizer of the action on a surface of constant total mass. The fact that both mass and action are preserved by the dynamical flow of \eqref{eq:nls} then implies the orbital stability. Evidently, for very particular systems of type \eqref{sistema}, the same dichotomy can be verified and the dynamical properties near the ground-state are the same as in the scalar case. Recalling our goal to find new dynamical behavior, instead of trying to disclose the optimal conditions that   ensure this precise threshold, we will analyze other properties, \textit{intrinsic to the vector-valued case}, that may induce instability.

In the seminal papers \cite{gss}, \cite{gss2}, the authors present very generic conditions that allow a complete characterization of the stability properties of ground-states. However, they assume a very precise knowledge of the linearized equation around the ground-state, specifically in what concerns the number of negative and null eigenvalues. In our context, the fine study of the linearized operator is quite challenging, especially due to the presence of multiple nonlinear terms and couplings. On the other hand, it was noticed in \cite{ribeiro} that the minimality of the ground-state $Q$ on the manifold
$$
\mathcal{V}=\left\{ u\in H^1(\er^d): \int |u|^{p+1} =  \int |Q|^{p+1} \right\},
$$ 
along with the existence of an unstable direction, is enough to prove orbital instablity. Heuristically, the minimality condition implies that the number of negative eigenvalues is either zero or one, thus reducing the stability problem to the existence of a negative direction. As we shall prove, this observation may be further extended to generic manifolds of codimension 1. Consequently, orbital instability will follow from the existence of a negative direction.

In this work, we study the conditions under which the curve
$$
\Gamma(t)=\Big(\gamma_1(t)\lambda^{\frac d2}(t)Q_1(\lambda(t) x),\dots,\gamma_m(t)\lambda^{\frac d2}(t)Q_m(\lambda(t)x)\Big),\quad \Gamma(0)=\mathbf{Q}, \quad M(\Gamma(t))=M(\mathbf{Q})$$
provides a direction for instability. The scaling factor $\lambda$ is connected to the Virial argument, while the coefficients $\gamma_j:[0,1]\to \er$ provide a way to exchange mass between components in such a way that the total mass is preserved. Consequently, any instability result obtained through the analysis of this curve shall be referred to as a \textit{mass-transfer instability}.
In a previous work \cite{CCFD}, we exploited this mechanism in a very concrete situation. Here, our goal is to derive a general criterion which may easily be applied to several semilinear Schrödinger systems at once.

At this point, it is important to notice that, if some other gauge invariance is present, then the choice of $\gamma_j$ is further restricted. In particular, if one has an invariance for each individual component, then the individual masses are conserved, thus preventing the mass-transfer mechanism (all $\gamma_j$ must be constant). Therefore, our results will be applied to systems presenting a single gauge invariance. 

Before we state our main results, we introduce a few notations and assumptions. To abbreviate, we write $e^{i\omega t}\mathbf{u} = (e^{i\omega_1 t}u_1,\dots, e^{i\omega_mt}u_m)$ and $\omega \mathbf{u}=(\omega_1 u_1, \dots, \omega_m u_m)$.
Define
$$
\beta_{j,k} = \mbox{ homogeneity degree of }n_k\mbox{ with respect to the }j^{th}\mbox{ component,}
$$
and assuming, without loss of generality, that the $m$-th component of the minimal bound-state $\mathbf{Q}$  is nonzero, we denote
$$
k_j=\frac{\lambda_j\omega_j \int |Q_j|^2}{\lambda_m\omega_m \int |Q_m|^2}.
$$
The first assumption concerns the initial value problem
\begin{equation}\label{ivp}
\mathbf{u}_t=JH'(\mathbf{u}),\quad \mathbf{u}(0)=\mathbf{u}_0.
\end{equation}
\begin{assumpt}[Local well-posedness]
	For $\mathbf{u}_0\in (H^1(\er^d))^m$, there exists $T=T(\|\mathbf{u}_0\|_{(H^1(\er^d))^m})$ and a unique $\mathbf{u}\in C([0,T], (H^1(\er^d))^m)$ solution of \eqref{ivp}. Moreover, we suppose that 
	$$H'\in C((H^1(\er^d))^m;\  (H^{-1}(\er^d))^m).
	$$
\end{assumpt}

In the vector-valued case, it may happen that the orbit of a bound-state is not closed. Our method does not cover this possibility. It is an interesting open problem to analyze orbital stability in this case. In the examples given below, the following assumption will be a consequence of the fact that $\omega_j\in\mathbb{Q}$ for all $j$.

\begin{assumpt}[Periodicity]
	The orbit $\{e^{i\omega t}\mathbf{Q}\}_{t\in \er}$ is closed.
\end{assumpt}

Finally, we require some regularity for the minimal bound-states, which may be verified using classical elliptic regularity bootstrap arguments.

\begin{assumpt}[Regularity]
	The bound-state $\mathbf{Q}$ satisfies $S''(\mathbf{Q}):(H^1(\er^d))^m \to  (H^{-1}(\er^d))^m$ and $x\cdot \nabla \mathbf{Q}\in (H^1(\er^d))^m$.
\end{assumpt}

\begin{Teorema}\label{teo:principal}
	Consider a real minimal bound-state $\mathbf{Q}$ of \eqref{sistema} and the symmetric matrix $A\in\mathcal{M}_{m\times m}$ given by
	\begin{displaymath}
	\begin{array}{llllllll}
	\displaystyle a_{0,0}&=&\displaystyle \dfrac d2\sum_k\Big(\dfrac{\alpha_k}2-1\Big)\Big(\dfrac{d\alpha_k}2-d-2\Big)\int n_k(\mathbf{Q})\\
	a_{0,j}&=&\displaystyle \sum_k \Big(\dfrac{d\alpha_k}4-\frac d2-1\Big)(\beta_{j,k}-k_j\beta_{m,k})\int n_k(\mathbf{Q})\\
	\displaystyle a_{j,j}&=&\displaystyle \sum_k\Big(\frac 12 k_j^2\beta_{m,k}(\beta_{m,k}-2)+\frac 12\beta_{j,k}(\beta_{j,k}-2)-k_j\beta_{j,k}\beta_{m,k}\Big)\int n_k(\mathbf{Q})\\
	a_{j,j_0}&=&\displaystyle\sum_k \frac 12\Big(k_jk_{j_0}\Big(\beta_{m,k}(\beta_{m,k}-2)\Big)+
	\displaystyle\beta_{j,k}\beta_{j_0,k}-\beta_{m,k}(k_j\beta_{j_0,k}+k_{j_0}\beta_{j,k})
	\Big)\int n_k(\mathbf{Q})
	\end{array}
	\end{displaymath}
	for $1\le j<m$ and $j\neq j_0$. If $A$ admits one negative eigenvalue then $\mathbf{Q}$ is orbitally unstable.
\end{Teorema}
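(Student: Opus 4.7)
The plan is to produce a tangent direction $\mathbf{V}\in T_{\mathbf{Q}}\mathcal{V}$ at which $S''(\mathbf{Q})$ is negative; then, following the framework indicated in the introduction (a codimension-one extension of the strategy of \cite{ribeiro}), orbital instability follows from the combination of the local minimality of $\mathbf{Q}$ on $\mathcal{V}$ with the existence of this negative direction, together with Assumptions 1--3.

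The natural candidates for $\mathbf{V}$ are the tangent vectors to the mass-preserving curve $\Gamma(t)$ of the introduction. Writing $\lambda(t)=1+a_0 t+O(t^2)$ and $\gamma_j(t)=1+a_j t+O(t^2)$, the key simplification is that the scaling $Q_j\mapsto\lambda^{d/2}Q_j(\lambda\cdot)$ is $L^2$-invariant, so $M(\Gamma(t))$ depends only on the $\gamma_j$:
\[
M(\Gamma(t))=M(\mathbf{Q})\ \Longleftrightarrow\ \sum_{j=1}^m k_j\bigl(\gamma_j(t)^2-1\bigr)=0.
\]
Differentiating at $t=0$ yields $a_m=-\sum_{j<m}k_j a_j$, so the effective parameters are $(a_0,a_1,\dots,a_{m-1})\in\er^m$, matching the dimension of $A$.

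I would then expand $H(\Gamma(t))$ using the homogeneity of the $n_k$:
\[
H(\Gamma(t))=\tfrac{\lambda^2}{2}\sum_j\gamma_j^2\int|\nabla Q_j|^2+\tfrac12\sum_k\lambda^{\frac{d\alpha_k}{2}-d}\Big(\prod_j\gamma_j^{\beta_{j,k}}\Big)\int n_k(\mathbf{Q}).
\]
The vanishing of $(S\circ\Gamma)'(0)$ follows from the Pohozaev identities satisfied by $\mathbf{Q}$ (namely $S'(\mathbf{Q})=0$ tested against the generators $x\cdot\nabla Q_j$ and $Q_j$, which also uses the regularity provided by Assumption~3). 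The second-order term, after substituting $a_m=-\sum_{j<m}k_j a_j$ and using those same Pohozaev identities to replace the $\int|\nabla Q_j|^2$ in favour of the $\int n_k(\mathbf{Q})$, must be rearranged into the quadratic form associated with $A$. A negative eigenvalue of $A$ then supplies an admissible $\mathbf{V}=\Gamma'(0)$ with $\langle S''(\mathbf{Q})\mathbf{V},\mathbf{V}\rangle<0$; if $\mathbf{V}\notin T_{\mathbf{Q}}\mathcal{V}$, a correction by a vector normal to $\mathcal{V}$ that does not destroy the sign brings $\mathbf{V}$ into $T_{\mathbf{Q}}\mathcal{V}$, and the codimension-one reduction extending \cite{ribeiro} then yields the desired instability.

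The main obstacle is the algebraic verification that the quadratic form of $S$ along $\Gamma$, once $a_m$ has been eliminated through the mass constraint, reproduces the specific entries of $A$: the coefficients $k_j$ multiplying the $\beta_{m,k}$ terms in $a_{0,j}$, $a_{j,j}$ and $a_{j,j_0}$ are precisely the imprint of this elimination, while the prefactor $d\alpha_k/2-d-2$ in $a_{0,0}$ and $a_{0,j}$ comes from the $\lambda$-derivatives of $\lambda^{d\alpha_k/2-d}$ combined with the Virial identity. A secondary difficulty is to ensure that the gauge and translation invariances do not introduce neutral directions overwhelming the negative one constructed above; this is where Assumption~2 (closedness of the orbit) is essential, since it turns the orbit $\{e^{i\omega t}\mathbf{Q}\}$ into a compact manifold on which the distance to a trajectory of \eqref{sistema} is well-defined.
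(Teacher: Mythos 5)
Your construction of the negative direction is essentially the paper's: the curve $\Gamma$, the elimination of $\gamma_m'$ through the mass constraint $\sum_j k_j(\gamma_j^2(t)-1)=0$, the expansion of $H(\Gamma(t))$ via the homogeneity of the $n_k$, and the use of the first integrals (Proposition \ref{propgradientes}) to replace $\int|\nabla Q_j|^2$ by combinations of $\int n_k(\mathbf{Q})$ are exactly the computation carried out in Section 2.2, and your reading of where each coefficient of $A$ comes from is accurate.

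The concluding step, however, contains a genuine error. You look for $\mathbf{V}\in T_{\mathbf{Q}}\mathcal{V}$ with $\langle S''(\mathbf{Q})\mathbf{V},\mathbf{V}\rangle<0$ and propose, if $\Gamma'(0)\notin T_{\mathbf{Q}}\mathcal{V}$, to correct it by a vector normal to $\mathcal{V}$ ``without destroying the sign''. This is impossible: since $\mathbf{Q}$ is a local minimum of $S$ over $\mathcal{V}$ and $S'(\mathbf{Q})=0$, the second variation of $S$ is nonnegative on all of $T_{\mathbf{Q}}\mathcal{V}$, so no tangent vector to $\mathcal{V}$ can be a negative direction. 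The criterion actually used (Theorem \ref{gon}) asks instead that $\Psi=\Gamma'(0)$ be tangent to the mass surface $\mathcal{M}$ --- not to $\mathcal{V}$ --- and that $J^{-1}\Psi$ be $L^2$-orthogonal to the orbit generators $J^{-1}\omega\mathbf{Q}$ and $\partial_{x_j}\mathbf{Q}$; for a real bound-state and a real curve $\Gamma$ these conditions hold automatically. The negativity of $\langle S''(\mathbf{Q})\Psi,\Psi\rangle$ then forces $J^{-1}\Psi$ to be \emph{transverse} to $\mathcal{V}$, and this transversality is exploited rather than corrected away: one runs the auxiliary Hamiltonian flow $\Lambda$ generated by $A(\mathbf{w})=\langle J^{-1}\Psi,G(\mathbf{w})\rangle_{L^2}$ (which preserves $M$), uses the implicit function theorem to flow any nearby state onto $\mathcal{V}$ in a small time $t_{\mathbf{u}}\neq 0$, and combines the resulting inequality $H(\mathbf{Q})<H(\mathbf{u})+t_{\mathbf{u}}P(\mathbf{u})$ with the conservation of $M$ and $H$ to show that $A(\mathbf{u}(t))$ grows without bound along the true flow, contradicting its boundedness on the tubular neighborhood of the orbit. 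This Shatah--Strauss-type mechanism is the substantive content you defer to ``the framework indicated in the introduction''; as written, your final reduction would fail.
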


\begin{nota}
	Even though it is not trivial to derive generic sufficient conditions for the existence of a negative eigenvalue for $A$, Theorem \ref{teo:principal} may be applied quite easily to any particular system, as we illustrate below. In fact, since this criterion is amenable to perturbations, there is no need to know  the \textit{exact} values of $\int n_k(\mathbf{Q})$. Therefore, in the cases where the exact derivation of a formula for the minimal bound-state is challenging, one may use a numerical approximation, thus allowing for computer-assisted proofs of orbital instability.
\end{nota}

Before we proceed to study concrete examples, we apply Theorem \ref{teo:principal} to derive some simple instability results. The first can be obtained directly from the analysis of the scaling parameter $\lambda(t)$ and thus it is just a generalization of the scalar $L^2$-supercritical instability.

\begin{Prop}[$L^2$-supercritical instability]\label{ucoro} Given $p>2+4/d$, suppose that $N$ admits the following decomposition: 
	$$N(\mathbf{u})=N_2(\mathbf{u})+N_{<p}(\mathbf{u})+N_{p}(\mathbf{u})+N_{>p}(\mathbf{u}),$$
	where 
	\begin{enumerate}
		\item $N_2$ is quadratic;
		\item $N_{<p}$ is the sum of homogeneities smaller than $p$ and is nonnegative;
		\item $N_p$ has homogeneity equal to $p$;
		\item $N_{>p}$ is the sum of homogeneities larger than $p$ and is nonpositive.
	\end{enumerate}
	Then any real minimal bound-state is orbitally unstable.
\end{Prop}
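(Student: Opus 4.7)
The plan is to exhibit a vector $v\in\er^m$ with $v^T A v<0$, so that the min-max principle produces a negative eigenvalue of $A$ and Theorem~\ref{teo:principal} yields orbital instability. Since the classical $L^2$-supercritical scalar instability is driven entirely by the scaling factor $\lambda(t)$, the natural candidate is the pure-scaling direction $v=e_0=(1,0,\ldots,0)$, obtained by freezing all $\gamma_j$ and letting only $\lambda$ vary along $\Gamma(t)$. With this choice $v^T A v=a_{0,0}$, so the task reduces to showing $a_{0,0}<0$.

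Setting $\sigma_k := d(\alpha_k-2)/2$, a short calculation identifies $a_{0,0}$ with $\tfrac{1}{2}\sum_k \sigma_k(\sigma_k-2)\int n_k(\mathbf{Q})$; this is precisely $f''(1)$ for $f(\lambda):=S(\lambda^{d/2}\mathbf{Q}(\lambda\,\cdot))$ once one substitutes the Pohozaev-type identity $f'(1)=0$, which holds because $\mathbf{Q}$, being a bound-state, satisfies $S'(\mathbf{Q})=0$ on $(H^1(\er^d))^m$. Explicitly that identity reads
\[
\sum_j\int|\nabla Q_j|^2 = -\tfrac{1}{2}\sum_k \sigma_k\int n_k(\mathbf{Q}).
\]
Introducing the critical value $\sigma_* := d(p-2)/2$, which satisfies $\sigma_*>2$ by hypothesis, I would split $\sigma_k(\sigma_k-2)=\sigma_k(\sigma_k-\sigma_*)+\sigma_k(\sigma_*-2)$ and apply Pohozaev to the second piece; this yields
\[
a_{0,0} = \tfrac{1}{2}\sum_k \sigma_k(\sigma_k-\sigma_*)\int n_k(\mathbf{Q}) - (\sigma_*-2)\sum_j\int|\nabla Q_j|^2.
\]
The gradient term is strictly negative since $\mathbf{Q}\neq 0$.

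It remains to check that the first sum is nonpositive, which I do by inspection of the four classes: $N_2$ contributes $0$ (because $\sigma_k=0$); $N_p$ contributes $0$ (because $\sigma_k=\sigma_*$); the pieces of $N_{<p}$ have $\sigma_k\in(0,\sigma_*)$ and $\int n_k(\mathbf{Q})\ge 0$, giving a nonpositive contribution; the pieces of $N_{>p}$ have $\sigma_k>\sigma_*$ and $\int n_k(\mathbf{Q})\le 0$, again nonpositive. Hence $a_{0,0}<0$, as required. The only subtlety---more bookkeeping than genuine obstacle---is the implicit assumption that the homogeneities in $N_{<p}$ are at least quadratic (so that $\sigma_k\ge 0$); a strictly subquadratic $\alpha_k$ would flip the sign of $\sigma_k(\sigma_k-\sigma_*)$ and break the argument. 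In the standard Schrödinger framework this is automatic from the requirement $\alpha_k\ge 2$ needed to control $n_k$ on $H^1(\er^d)$, so no additional hypothesis is needed in practice.
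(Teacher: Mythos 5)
Your proof is correct and is essentially the paper's own argument: both reduce to showing $a_{0,0}<0$ via the pure-scaling direction, use the first integral $\sum_j\int|\nabla Q_j|^2=-\frac d2\sum_k(\frac{\alpha_k}2-1)\int n_k(\mathbf{Q})$ from Proposition \ref{propgradientes}, and check term by term that the homogeneities below and above $p$ contribute with the favorable sign; your splitting $\sigma_k(\sigma_k-2)=\sigma_k(\sigma_k-\sigma_*)+\sigma_k(\sigma_*-2)$ is just a rewriting of the paper's per-term bound $(\frac{d\alpha_k}{2}-d-2)\le(\frac{dp}{2}-d-2)$. Your closing remark that the terms in $N_{<p}$ must have $\alpha_k\ge 2$ is a fair observation that applies equally to the paper's version.
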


We now focus on the $L^2$-critical case:
$$N(\mathbf{u})=N_2(\mathbf{u})+N_p(\mathbf{u}),$$
where $N_2$ is quadratic and $N_p$ is homogeneous of degree $p=2+4/d$.

\begin{Prop}[$L^2$-critical instability I]\label{L2critical}
	Suppose that
	$$N_2(\mathbf{u})=\sum_{j=1}^m c_j \int |u_j|^2,\quad c_j\in\er. $$
	If $c_1\lambda_m\omega_m\neq c_m\lambda_1\omega_1$, then all real minimal bound-states $\mathbf{Q}$ with $Q_1, Q_m\neq 0$ are orbitally unstable.
\end{Prop}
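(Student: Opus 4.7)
The plan is to apply Theorem \ref{teo:principal} directly: it suffices to exhibit a negative eigenvalue of the symmetric matrix $A$. Under the $L^2$-critical hypothesis every nonlinear term $n_k$ has homogeneity $\alpha_k\in\{2,p\}$ with $p=2+4/d$, and I expect the two cases to produce complementary cancellations that collapse the first row of $A$ dramatically.

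First, I would compute $a_{0,0}$. The factor $(\alpha_k/2-1)$ vanishes whenever $\alpha_k=2$, while the factor $(d\alpha_k/2-d-2)$ vanishes whenever $\alpha_k=p$. Since no other homogeneity occurs, $a_{0,0}=0$. Next, in the expression for $a_{0,j}$, the prefactor $(d\alpha_k/4-d/2-1)$ vanishes exactly when $\alpha_k=p$, so only the quadratic terms $N_2$ contribute. Decomposing $N_2$ into the individual summands $n^{(i)}(\mathbf{u})=c_i|u_i|^2$, one has $\beta_{j,i}=2\delta_{j,i}$, so only $i=j$ and $i=m$ give nonzero $\beta_{j,i}-k_j\beta_{m,i}$. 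A short calculation yields
$$
a_{0,j}=\frac{2\int|Q_j|^2}{\lambda_m\omega_m}\bigl(c_m\lambda_j\omega_j-c_j\lambda_m\omega_m\bigr).
$$
Specialising to $j=1$ and using the hypothesis $c_1\lambda_m\omega_m\ne c_m\lambda_1\omega_1$ together with $Q_1\ne 0$, I obtain $a_{0,1}\ne 0$.

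Finally, I would look at the $2\times 2$ principal submatrix of $A$ indexed by $(0,1)$, which has the form
$$
\begin{pmatrix} 0 & a_{0,1}\\ a_{0,1} & a_{1,1} \end{pmatrix},
$$
with determinant $-a_{0,1}^{2}<0$; hence it has a negative eigenvalue. By Cauchy's interlacing theorem for real symmetric matrices, the smallest eigenvalue of $A$ is bounded above by the smallest eigenvalue of this submatrix, so $A$ itself has a negative eigenvalue, and Theorem \ref{teo:principal} concludes the proof.

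Conceptually, the proof is a direct corollary of Theorem \ref{teo:principal}; the computations are routine, and the only real obstacle is recognising that the $L^2$-critical scaling exponent is precisely the one that annihilates both $a_{0,0}$ and the critical contribution to $a_{0,j}$, leaving an indefinite $2\times 2$ block driven entirely by the quadratic asymmetry $c_m\lambda_1\omega_1-c_1\lambda_m\omega_m$.
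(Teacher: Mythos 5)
Your proposal is correct and follows essentially the same route as the paper: the $L^2$-critical exponent kills $a_{0,0}$ and the non-quadratic contribution to $a_{0,1}$, the quadratic terms give exactly your formula $a_{0,1}=-2\bigl(c_1-\tfrac{c_m\lambda_1\omega_1}{\lambda_m\omega_m}\bigr)\int Q_1^2\neq 0$, and the paper likewise concludes from the negative determinant of the $2\times 2$ principal minor that $A$ cannot be positive semidefinite. Your extra detail (the explicit $\beta_{j,i}=2\delta_{j,i}$ bookkeeping and the interlacing justification) only makes explicit what the paper leaves implicit.
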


\begin{Prop}[$L^2$-critical instability II]\label{L2critical2}
	Suppose that
	$$N_2(\mathbf{u})= \lambda\partere \int u_1\bar{u}_m,\quad \lambda\neq 0. $$
	If a real minimal bound-state $\mathbf{Q}$ satisfies $\lambda_1\omega_1\int Q_1^2 \neq \lambda_m\omega_m\int Q_m^2$ and $\int Q_1Q_m\neq 0$, then $\mathbf{Q}$ is orbitally unstable.
\end{Prop}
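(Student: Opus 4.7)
The plan is to apply Theorem~\ref{teo:principal} by computing the relevant entries of the matrix $A$ and exhibiting a negative eigenvalue. I index the nonlinear terms by $k\in\{2,p\}$ with $\alpha_2=2$ and $\alpha_p=p=2+4/d$. My first observation is that $a_{0,0}$ vanishes identically in this $L^2$-critical case: for $\alpha_k=2$ one has $\alpha_k/2-1=0$, while for $\alpha_k=p$ one has $d\alpha_k/2-d-2=d+2-d-2=0$. Both contributions to the sum defining $a_{0,0}$ are zero, so $a_{0,0}=0$.

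Next, only the quadratic term can contribute to the off-diagonal entries $a_{0,j}$, because the prefactor $d\alpha_k/4-d/2-1$ also vanishes when $\alpha_k=p$. Since $n_2(\mathbf{u})=\lambda\partere(u_1\bar u_m)$, its partial homogeneity degrees are $\beta_{1,2}=\beta_{m,2}=1$ and $\beta_{j,2}=0$ for $1<j<m$. In particular,
$$
a_{0,1}=-(1-k_1)\,\lambda \int Q_1Q_m=(k_1-1)\,\lambda \int Q_1Q_m.
$$
The two hypotheses of the proposition, $\lambda_1\omega_1\int Q_1^2\neq\lambda_m\omega_m\int Q_m^2$ (which is precisely $k_1\neq 1$) and $\int Q_1Q_m\neq 0$, together with $\lambda\neq 0$, guarantee that $a_{0,1}\neq 0$.

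To finish, I consider the principal $2\times 2$ submatrix of $A$ obtained by restricting to rows and columns $\{0,1\}$, namely $\begin{pmatrix}0&a_{0,1}\\a_{0,1}&a_{1,1}\end{pmatrix}$. Its determinant equals $-a_{0,1}^2<0$, so this submatrix has a strictly negative eigenvalue. Padding a corresponding eigenvector with zeros yields a vector $v\in\er^m$ with $v^TAv<0$, which forces $A$ itself to admit a negative eigenvalue. Theorem~\ref{teo:principal} then delivers the orbital instability of $\mathbf{Q}$.

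I do not expect any serious obstacle: the argument reduces to bookkeeping around the two critical-exponent cancellations (which make $a_{0,0}=0$ and restrict the zeroth row of $A$ to the quadratic contribution), followed by the elementary fact that a symmetric matrix with a vanishing diagonal entry and a nonzero off-diagonal entry in the same row is automatically indefinite. The two hypotheses of the proposition are tailored precisely to guarantee $a_{0,1}\neq 0$, preventing the argument from degenerating.
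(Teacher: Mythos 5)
Your proposal is correct and follows essentially the same route as the paper: the authors likewise reduce the claim to verifying $a_{0,0}=0$ and $a_{0,1}\neq 0$ and then invoke the negativity of the determinant of the $2\times 2$ principal minor, exactly as in their proof of Proposition \ref{L2critical}. You have merely written out explicitly the cancellations at $\alpha_k=2$ and $\alpha_k=2+4/d$ and the identification of the hypothesis with $k_1\neq 1$, which the paper leaves as ``a direct consequence of the hypothesis.''
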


\begin{nota}
	The above results provide criteria depending on the first and last components of the bound-state. Evidently, since one may choose the order of the components, this causes no loss in generality. Moreover, the scaling parameter by itself is not sufficient to conclude instability, which means that these results are truly intrinsic to the vector-valued case.
\end{nota}

Finally, by continuity, we realize that instability of minimal bound-states may even occur in $L^2$-subcritical cases:
\begin{Coro}[Instability in almost $L^2$-critical cases]
	\label{corL2critical}
	Suppose that 
	$$N(\mathbf{u})=N_2(\mathbf{u})+N_p(\mathbf{u}),$$
	where $N_p$ is homogeneous of degree $p$ and $N_2$ is as in Proposition \ref{L2critical}. Assume that there exists a continuous curve of real minimal bound-states $p\mapsto \mathbf{Q}(p)$ around $p_0=2+4/d$. If the hypotheses of Proposition \ref{L2critical} are verified for $p_0$, then all bound-states $\mathbf{Q}(p)$,  $|p-p_0|\ll 1$, are orbitally unstable. The result remains valid if one replaces Proposition \ref{L2critical} by Proposition \ref{L2critical2}.
\end{Coro}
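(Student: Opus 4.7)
The plan is to read the corollary as a perturbation statement around $p_0=2+4/d$, leveraging what is already known at $p_0$. By hypothesis, the assumptions of Proposition \ref{L2critical} (or Proposition \ref{L2critical2}) hold at $\mathbf{Q}(p_0)$, so the proof of that proposition produces a strictly negative eigenvalue $\mu_0<0$ of the matrix $A(p_0)$ appearing in Theorem \ref{teo:principal}. Orbital instability of $\mathbf{Q}(p)$ for $|p-p_0|\ll 1$ will then follow from Theorem \ref{teo:principal} as soon as one knows that $A(p)$ still admits a negative eigenvalue in a neighborhood of $p_0$.

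First I would make the $p$-dependence of $A$ explicit and argue that $p\mapsto A(p)$ is continuous. Inspecting the formulas in Theorem \ref{teo:principal}, the entries of $A(p)$ involve: (i) the homogeneity exponents $\alpha_k$ and partial degrees $\beta_{j,k}$, which are continuous (or constant) in $p$; (ii) the mass ratios $k_j(p)$, well-defined and continuous in $\mathbf{Q}(p)\in (H^1(\er^d))^m$ as long as $Q_m(p)\not\equiv 0$; and (iii) the integrals $\int n_k(\mathbf{Q}(p))$, which are continuous by Sobolev embedding applied to the polynomial $n_k$. Since $Q_m(p_0)\neq 0$ and $\mathbf{Q}(p)\to \mathbf{Q}(p_0)$ in $(H^1(\er^d))^m$ as $p\to p_0$, all three inputs depend continuously on $p$ near $p_0$, and hence so do the entries of $A(p)$.

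Then I would invoke the standard continuity of the eigenvalues of a symmetric matrix with respect to its entries: there exists $\delta>0$ such that, for every $|p-p_0|<\delta$, the matrix $A(p)$ has an eigenvalue in $(\mu_0-|\mu_0|/2,\mu_0+|\mu_0|/2)$, which is in particular strictly negative. Theorem \ref{teo:principal} then delivers orbital instability of $\mathbf{Q}(p)$ throughout this range, which contains $L^2$-subcritical values $p<p_0$. The replacement of Proposition \ref{L2critical} by Proposition \ref{L2critical2} requires no change in the argument, since only the existence of a negative eigenvalue of $A(p_0)$ is used. I do not anticipate any serious obstacle: this is genuinely a one-parameter perturbation, and the only mildly delicate point is the joint continuity of $(\mathbf{u},p)\mapsto \int n_k(\mathbf{u})$ when $n_k$ itself depends on $p$ through its degree, which is routine for the polynomial nonlinearities considered here.
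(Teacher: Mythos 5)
Your argument is correct and is exactly the route the paper intends: the paper gives no separate proof of this corollary, merely invoking ``by continuity,'' and your write-up (continuity of the entries of $A(p)$ via convergence of $\mathbf{Q}(p)$ in $(H^1(\er^d))^m$, Sobolev embedding for the integrals $\int n_k(\mathbf{Q}(p))$, persistence of the negative eigenvalue of the symmetric matrix, then Theorem \ref{teo:principal}) is the natural fleshing-out of that one-line justification. No gaps.
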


%
%
%

\bigskip

\noindent
For illustrative purposes, we will apply these results to the following concrete models:
\begin{exemplo}[Quadratic Schr\"odinger system I]
		\end{exemplo}
	\begin{equation}\label{quadeq}
	\left\{\begin{array}{llll}
	iu_t+\Delta u+\overline{u}v=0\\
	i\sigma v_t+\Delta v-\beta v+\dfrac 12 u^2=0,\quad (x,t)\in \er^d\times \er.
	\end{array}\right.
	\end{equation}	
	This model governs the resonant interaction between waves propagating in a $\chi^{(2)}$ dispersive medium in several physical contexts, such as magneto-hydrodynamics or nonlinear optics (see for instance \cite{q10}, \cite{q11},\cite{q12}). In \cite{Saut},  the authors prove the existence of ground-state solutions of the form $(e^{i\omega t}Q_1(x),e^{i\omega t}Q_2(x))$ for this system by minimizing the action over the manifold
	$$
	\mathcal{V}_\lambda=\left\{ (u,v)\in (H^1(\er^d))^2: \mbox{Re }\int u^2\bar{v} = \lambda  \right\},
	$$
	for some specific $\lambda\in\er$. Moreover, they show orbital stability in the $L^2$-subcritical dimension $d=2$ when $\beta=0$.
	
	\noindent
	We may recover the instability results that we derived in \cite{CCFD} for the $L^2$-(super)critical cases: the ground-state $\mathbf{Q}=(Q_1,Q_2)$ is orbitally unstable if
	\begin{enumerate}
		\item $d\ge 5$ (Corollary \ref{ucoro});
		\item $d=4$ and $\beta\neq 0$ (Proposition \ref{L2critical}).
	\end{enumerate}
%
	Also, in the synchronous case $\beta=\omega(1-2\sigma)$, the ground-state can be computed explicitly. Indeed, one has $$\mathbf{Q}=\mathbf{Q}_{sync}=\left(\sqrt{\dfrac 23}q,\dfrac 1{\sqrt{3}}q\right),$$ where $q$ is the ground-state of 
	$$-\omega q+\Delta q+\frac{1}{\sqrt{3}}q^2=0.$$
	In this situation, in the subcritical cases $d\leq 3$, Theorem \ref{teo:principal} yields that for $\sigma\gg1$, $\mathbf{Q}_{sync}$ is orbitally unstable:
	\begin{Prop}\label{prop:subL2quad2D}Let $d\leq 3$ and $\beta=\omega(1-2\sigma)$. Define $\sigma_0$ as the positive root of
		$$
		3(4-d)(1+4\sigma)-(1-2\sigma)^2=0.
		$$
	Then, for $\sigma>\sigma_0$, $\mathbf{Q}_{sync}$ is orbitally unstable by the flow of \eqref{quadeq}.
	\end{Prop}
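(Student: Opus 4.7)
The plan is to apply Theorem~\ref{teo:principal} with $m=2$. For the quadratic system~\eqref{quadeq} one has $\lambda_1=1$, $\lambda_2=\sigma$, and consistency of the ansatz $e^{i\omega_j t}Q_j$ with the resonance condition forces $\omega_1=\omega$, $\omega_2=2\omega$. Writing the system in Hamiltonian form gives
$$N(\mathbf{u})=n_1(\mathbf{u})+n_2(\mathbf{u}),\quad n_1=\beta|v|^2,\quad n_2=-2\,\partere\!\int u^2\bar v,$$
so $\alpha_1=2$, $\beta_{1,1}=0$, $\beta_{2,1}=2$ and $\alpha_2=3$, $\beta_{1,2}=2$, $\beta_{2,2}=1$. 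The three preliminary assumptions are standard for this model: local well-posedness in $H^1$ for $d\le 3$ is classical for quadratic nonlinearities; periodicity of the orbit follows from $\omega_1/\omega_2=1/2\in\mathbb{Q}$; and regularity of $\mathbf{Q}_{sync}$ comes from the scalar elliptic bootstrap applied to $q$.

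Next I would carry out the scalar computations. From $Q_1=\sqrt{2/3}\,q$, $Q_2=q/\sqrt 3$ one reads off
$$k_1=\frac{\lambda_1\omega_1\int Q_1^2}{\lambda_2\omega_2\int Q_2^2}=\frac{\omega(2/3)\int q^2}{2\sigma\omega(1/3)\int q^2}=\frac{1}{\sigma},$$
while $\int n_1(\mathbf{Q})=\tfrac{\beta}{3}\int q^2$ and $\int n_2(\mathbf{Q})=-\tfrac{4}{3\sqrt 3}\int q^3$. The Nehari and Pohozaev identities associated to $-\omega q+\Delta q+\tfrac{1}{\sqrt 3}q^2=0$ give $\int|\nabla q|^2=\frac{d\omega}{6-d}\int q^2$ and $\int q^3=\frac{6\sqrt 3\,\omega}{6-d}\int q^2$, so every quantity entering the matrix $A$ can be reduced to a common multiple of $\omega\int q^2$.

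Then I would write down the $2\times 2$ symmetric matrix $A$ with entries $a_{0,0}$, $a_{0,1}$, $a_{1,1}$. The quadratic term $n_1$ is killed by the factor $(\alpha_k/2-1)$ appearing in $a_{0,0}$, so only the cubic contributes there, yielding $a_{0,0}=\tfrac{d(d-4)}{8}\int n_2(\mathbf{Q})$, manifestly positive for $d\le 3$. After substituting $\beta=\omega(1-2\sigma)$, $k_1=1/\sigma$ and the Pohozaev relations into the formulas for $a_{0,1}$ and $a_{1,1}$, one checks that the condition ``$A$ has a negative eigenvalue'' reduces to $\det A<0$, and $\det A$ is proportional, up to a positive constant depending only on $d$, $\omega$ and $\int q^2$, to $(1-2\sigma)^2-3(4-d)(1+4\sigma)$. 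Since this polynomial in $\sigma$ is positive for large $\sigma$ and has a positive root $\sigma_0$, the instability criterion of Theorem~\ref{teo:principal} applies for every $\sigma>\sigma_0$.

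The main obstacle is not conceptual but purely computational: tracking signs and coefficients when plugging the homogeneities $(\alpha_k,\beta_{j,k})$, the factor $k_1=1/\sigma$, and the Pohozaev identities into the three entries of $A$, and then simplifying $\det A$ down to the clean polynomial $3(4-d)(1+4\sigma)-(1-2\sigma)^2$. A secondary subtlety is verifying that the trace condition never dominates in the range $d\le 3$, i.e.\ that the diagonal entries remain positive so that the negative eigenvalue is indeed detected by the sign of the determinant; this follows from the explicit expression for $a_{0,0}$ together with a direct check of $a_{1,1}$.
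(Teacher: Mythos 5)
Your strategy is exactly the paper's: exploit synchronicity to get the explicit ground-state $\left(\sqrt{2/3}\,q,\,q/\sqrt3\right)$, compute $k_1=1/\sigma$, reduce all integrals to $\int q^2$ and $\int q^3$ via the scalar Pohozaev/Nehari identities, observe that $a_{0,0}>0$ in the subcritical range so that a negative eigenvalue of the $2\times 2$ matrix $A$ is equivalent to $\det A<0$, and unwind that determinant into a polynomial condition in $\sigma$. All of the structural points you list (the identifications $\lambda_2=\sigma$, $\omega_2=2\omega$, the vanishing of the quadratic term's contribution to $a_{0,0}$ through the factor $\alpha_k/2-1$, and the reduction of the eigenvalue condition to the sign of the determinant) match the paper.

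There is, however, one concrete error in your setup that would derail the computation. With the paper's normalization $H=\frac12\sum_j\int|\nabla u_j|^2+\frac12 N$ and $\mathbf{u}_t=JH'(\mathbf{u})$, matching the system \eqref{quadeq} forces $N=\beta\int|v|^2-\partere\int \bar u^2 v$, i.e.\ $\int n_2(\mathbf{Q})=-\int Q_1^2Q_2=-\frac{2}{3\sqrt3}\int q^3$, not $-\frac{4}{3\sqrt3}\int q^3$ as you write (you can check this against either equation: e.g.\ $\tfrac12\nabla_v N$ must produce $\beta v-\tfrac12 u^2$). This factor of $2$ is not an innocent rescaling: the entry $a_{0,1}$ is a sum of a contribution from $n_1=\beta|v|^2$ and one from $n_2$, so doubling only the cubic piece changes the balance between the two terms. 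Indeed, with your normalization the two contributions to $a_{0,1}$ cancel exactly at $d=3$, so $\det A=4\,a_{0,0}a_{1,1}>0$ and the criterion detects nothing — the opposite of the claimed conclusion. (You do correctly take $\beta_{2,2}=1$, where the paper's table has a typo.) Beyond this, the entire substance of the proof — assembling $a_{0,0}$, $a_{0,1}$, $a_{1,1}$ and simplifying $\det A$ to the polynomial $3(4-d)(1+4\sigma)-(1-2\sigma)^2$ — is asserted rather than carried out; since the proposition is nothing but this computation, you should actually perform it with the corrected $N$ before claiming the result.
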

	
	\medskip
	\noindent
	To the best of our knowledge, there are no examples in the literature of such $L^2$-subcritical unstable ground-states for Schr\"odinger-type coupled systems. We will exhibit another one in the next example:

\begin{exemplo}[Quadratic Schr\"odinger system II]
\end{exemplo}
	\begin{equation}\label{eqademir1}
\left\{\begin{array}{llll}
3iu_t+\Delta u-\beta_1u=-vw\\
2iv_t+\Delta v-\beta v=-\dfrac 12w^2-u\overline{w}\\
iw_t+\Delta w-w=-v\overline{w}-u\overline{v},\quad (x,t)\in \er^d\times \er.
\end{array}\right.
\end{equation}

\medskip

\noindent
This system is a generalization of the previous one for three-wave interactions in the framework of optical fiber systems.  (see \cite{Kivshar}). Both quadratic systems arise when second order nonlinear processes, such as second harmonic generation, are taken into account. Under some conditions on $\beta, \beta_1$, the existence of ground-states may be achieved through the minimization of the action on
	$$
\mathcal{V}_\lambda=\left\{ (u,v)\in (H^1(\er^d))^2: \mbox{Re }\int \left(2\bar{u}vw + w^2\bar{v}\right) = \lambda  \right\},
$$
for a well-chosen $\lambda\in\er$.

\bigskip

\noindent
Again, in the synchronous case $\beta_1=-7$ and $\beta=-2$, it is possible to compute explicitly the ground-state $\mathbf{Q}_{sync}:=(aq,bq,cq)$, where $(a,b,c)\in \mathbb{S}^2$ and $q$ is the ground-state of
$$-2q+\Delta q+\dfrac 16(\sqrt{3}+\sqrt{5})q^2=0.$$
In this framework, Theorem \ref{teo:principal} yields the following result:
\begin{Prop}\label{prop:subL2quad3D}Let $d=1$, $\beta_1=-7$ and $\beta=-2$. Then the ground-state $\mathbf{Q}_{sync}$ is orbitally unstable by the flow of \eqref{eqademir1}.
\end{Prop}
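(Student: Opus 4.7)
The plan is to apply Theorem \ref{teo:principal}. The three structural assumptions are easily verified: local well-posedness for quadratic Schr\"odinger systems in dimension $d=1$ is classical; the frequencies $\omega_j$ turn out to be positive integers, so the orbit of $\mathbf{Q}_{sync}$ is closed; and elliptic bootstrap applied to the profile equation yields the required regularity. The real work is to assemble the matrix $A$ and show that it admits a negative eigenvalue.

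First I would extract the data entering the theorem from \eqref{eqademir1}: we have $m=3$, $(\lambda_1,\lambda_2,\lambda_3) = (3,2,1)$, and two cubic nonlinearities ($\alpha_k = 3$) whose partial-homogeneity vectors $(\beta_{1,k}, \beta_{2,k}, \beta_{3,k})$ are $(1,1,1)$ and $(0,1,2)$, respectively. Inserting $\mathbf{Q}_{sync} = (aq, bq, cq)$ into the three profile equations and matching them with the scalar equation satisfied by $q$ forces $\omega_1 = 3$, $\omega_2 = 2$, $\omega_3 = 1$, together with the coupling relations
$$\frac{bc}{a} \;=\; \frac{c^2+2ac}{2b} \;=\; \frac{b(a+c)}{c} \;=\; \frac{\sqrt 3 + \sqrt 5}{6}.$$
Combined with the normalization $a^2+b^2+c^2 = 1$, these yield the closed-form values $a^2 = (5-\sqrt 5)/15$, $b^2 = 1/3$, $c^2 = (5+\sqrt 5)/15$, and the definition of $k_j$ then gives $k_1 = 9(3-\sqrt 5)/2$ and $k_2 = 5-\sqrt 5$.

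Next I would assemble $A$. The specialization $d=1$, $\alpha_k = 3$ simplifies several coefficients in Theorem \ref{teo:principal}: the prefactors $(\alpha_k/2-1)(d\alpha_k/2-d-2)$ and $(d\alpha_k/4-d/2-1)$ both collapse to $-3/4$, while the identity $\beta_{3,2}(\beta_{3,2}-2)=0$ suppresses several terms in the $(j,j_0)$-block. Setting $I_k = \int n_k(\mathbf{Q}_{sync})$, each $I_k$ is an explicit nonzero multiple of $\int q^3$, and the entries of $A$ become concrete expressions in $k_1, k_2, I_1, I_2$ lying in the field $\qe(\sqrt 5)$. For instance, one quickly gets $a_{0,0} = -(3/8)(I_1+I_2)$ and $a_{1,1} = -(k_1+1)^2 I_1/2$; the remaining entries follow mechanically from the stated formulas.

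The main obstacle is then to exhibit a direction $\mathbf{v}\in\er^3$ with $\mathbf{v}^\top A \mathbf{v} < 0$: because the entries involve $\sqrt 5$ and the matrix does not visibly decouple, the cleanest route is an algebraic verification that some leading principal minor or the determinant of $A$ has the wrong sign for positive semidefiniteness. As pointed out in the Remark after Theorem \ref{teo:principal}, the criterion is stable under perturbations of $I_1, I_2$, so substituting numerical values for the algebraic constants and checking that one eigenvalue of $A$ is strictly negative is equally conclusive. Either route furnishes the required negative eigenvalue, and Theorem \ref{teo:principal} then delivers the orbital instability of $\mathbf{Q}_{sync}$.
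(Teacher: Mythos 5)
Your overall strategy is exactly the paper's: verify the structural assumptions, use synchronicity to write $\mathbf{Q}_{sync}=(aq,bq,cq)$ with $a^2=(5-\sqrt5)/15$, $b^2=1/3$, $c^2=(5+\sqrt5)/15$ (your coupling relations are precisely the critical-point condition $\nabla f(X_0)=\gamma X_0$ for $f(x,y,z)=yz^2+2xyz$ that the paper obtains via Proposition \ref{prop:caractgs}), compute $k_1=9a^2/c^2$, $k_2=4b^2/c^2$, assemble $A$, and check that a principal minor (the paper uses $\det A$) has the wrong sign. The diagonal entries you report agree with the paper's.

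There is, however, a concrete gap in the assembly of $A$: you list only the two cubic potentials as the $n_k$'s and claim the entries of $A$ are expressions in $k_1,k_2,I_1,I_2$ alone. But the Hamiltonian of \eqref{eqademir1} also contains the quadratic part $N_2(u,v,w)=\beta_1\int|u|^2+\beta\int|v|^2+\int|w|^2$ coming from the linear terms $-\beta_1u,-\beta v,-w$, and these terms cannot be gauged away (there is only one global invariance). For $\alpha_k=2$ the prefactors in $a_{0,0}$, $a_{j,j}$ and $a_{j,j_0}$ vanish, but the prefactor $\bigl(\tfrac{d\alpha_k}{4}-\tfrac d2-1\bigr)=-1$ in $a_{0,j}$ does not: the quadratic potentials contribute $(14a^2+2k_1c^2)\int Q^2$ to $a_{0,1}$ and $(4b^2+2k_2c^2)\int Q^2$ to $a_{0,2}$, exactly as in the paper's computation. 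These contributions are strictly positive and, via the first-integral relation $\int Q^2=\tfrac54 f_{max}\int Q^3$ (which you would also need in order to reduce everything to a single multiple of $\int Q^3$, and which you never invoke), they are of the same order as the cubic contributions. Dropping them changes $a_{0,1}$ and $a_{0,2}$ substantially, so your final sign check is performed on the wrong matrix; the conclusion cannot be drawn from the matrix as you have assembled it. Restoring the $\alpha_k=2$ terms and the Pohozaev relation brings your argument into line with the paper's proof.
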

Notice that our method does not seem to allow any conclusions in the subcritical dimensions $d=2$ and $d=3$. 
\medskip
\begin{nota}
Interestingly enough, in \cite{Pastor}, the author showed that $\mathbf{Q}_{sync}$ is spectrally stable in dimension $d=1$, \textit{i.e.}, that the linearized operator around the ground-state does not have any negative eigenvalues. The spectral stability of the ground-state only implies that there are no exponentially diverging solutions in its neighborhood (and thus it is a weaker notion than that of orbital stability). Therefore there is no contradiction with Proposition \ref{prop:subL2quad3D}.
\end{nota}

\begin{exemplo}[Cubic Schr\"odinger system I]
\end{exemplo}
	\begin{equation}
	\label{cubic}
\left\{\begin{array}{llll}
\displaystyle iu_t+\Delta u-u+\Big(\frac 19|u|^2+2|w|^2\Big)u+\frac 13\overline{u}^2w=0\\
\\
\displaystyle 	i\sigma w_t+\Delta w-\mu w+(9|w|^2+2|u|^2)w+\frac 19u^3=0,\quad (x,t)\in \er^d\times \er.
\end{array}\right.
\end{equation}
\noindent
This system, derived by Sammut \textit{et al.} in \cite{Sammut}, models the resonant interaction between a monochromatic beam of frequency $\omega$ propagating in a Kerr $\chi^{(3)}$ material and its third harmonic. The third-harmonic generation leads to features typical of $\chi^{(2)}$ media. 
In \cite{FA}, the existence of a ground-state $\mathbf{Q}$ for $\sigma, \mu>0$ and $\omega>\max\{-1,-\mu/{3\sigma}\}$ was proved by minimizing the action over the Nehari manifold
$$
\mathcal{V}=\left\{ (u,v)\in (H^1(\er^d))^2\setminus\{(0,0)\}: \langle S'(u,v), (u,v)\rangle_{H^{-1}\times H^1}=0 \right\}.
$$
As in the quadratic cases, we conclude that the ground-state solution is orbitally unstable if:
	\begin{enumerate}
		\item $d\ge 3$ (Corollary \ref{ucoro});
		\item $d=2$ and $\mu\neq 3\sigma$ (Proposition \ref{L2critical}).
	\end{enumerate}

\begin{exemplo}[Cubic Schr\"odinger system II]
	\end{exemplo}
	\begin{equation}\label{eq:rabi}
	\left\{\begin{array}{llll}
	iu_t + \Delta u +  \lambda v + k_{11}|u|^2u + k_{12}|v|^2u =0\\
	iv_t + \Delta v + \lambda u + k_{12}|v|^2u + k_{22}|v|^2v=0,\quad (x,t)\in \er^2\times \er.
	\end{array}\right.
	\end{equation}
This system models a two-component Bose-Einstein condensate irradiated by an external electromagnetic field with no trapping potential and with Rabi frequency $\lambda$ (\cite{Rabi1},\cite{Rabi2}). The existence of bound-states $(u,v)=e^{i\omega t}(P,Q)$, $\omega>\lambda$, can be achieved by minimizing the action on the manifold
$$
\mathcal{V}_\lambda=\left\{ (u,v)\in (H^1(\er^d))^2: \int \left(k_{11}|u|^4 + 2k_{12}|u|^2|v|^2 + k_{22}|v|^4\right) = \lambda  \right\}, \quad \lambda>0.
$$
If $k_{12}>0$, a standard application of the Schwarz symmetrization reveals that $P, Q$ are real and radially decreasing. Since neither $P$ nor $Q$ can be zero, $\int PQ\neq 0$. Applying Proposition \ref{L2critical2}, we derive the following:
\begin{Prop}
	Set $\lambda\neq 0$ and $k_{12}>0$. If the ground-state $(P,Q)$ satisfies $\int P^2\neq \int Q^2$, then it is orbitally unstable.
\end{Prop}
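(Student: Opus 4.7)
The plan is to apply Proposition \ref{L2critical2} directly; once the system is cast in the pseudo-Hamiltonian form \eqref{sistema}, the verification is essentially cosmetic. First, I would identify $m=2$, $d=2$, $\lambda_1=\lambda_2=1$, $\omega_1=\omega_2=\omega$, and decompose $N=N_2+N_4$ with
\[
N_2(u,v)=-2\lambda\,\partere\!\int u\bar v,\qquad N_4(u,v)=-\tfrac{k_{11}}{2}\!\int|u|^4-k_{12}\!\int|u|^2|v|^2-\tfrac{k_{22}}{2}\!\int|v|^4.
\]
Since $2+4/d=4$ in dimension $d=2$, the quartic piece $N_4$ is exactly $L^2$-critical, $N_2$ has precisely the cross-coupling structure required by Proposition \ref{L2critical2} (with nonzero constant $-2\lambda$), and the assumptions of local well-posedness, periodicity, and regularity are standard for cubic NLS systems on $\er^2$ with a single gauge invariance.

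Next, I would check the two hypotheses of Proposition \ref{L2critical2} against the ground-state $\mathbf{Q}=(P,Q)$. The inequality $\lambda_1\omega_1\int Q_1^2\neq\lambda_m\omega_m\int Q_m^2$ collapses to $\omega\int P^2\neq\omega\int Q^2$, which is precisely the hypothesis $\int P^2\neq\int Q^2$. For the cross term $\int PQ\neq 0$, I would invoke the argument sketched in the excerpt: because $k_{12}>0$, a Schwarz symmetrization lets one take $P,Q$ real, nonnegative, and radially decreasing. Neither component can vanish identically, for otherwise the stationary equation satisfied by the zero component, say $-\omega P+\Delta P+\lambda Q+\cdots=0$ with $P\equiv 0$, would force $\lambda Q\equiv 0$ and hence $Q\equiv 0$ (using $\lambda\neq 0$), contradicting the non-triviality of the ground-state. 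Two nonnegative, nontrivial, radially decreasing functions on $\er^2$ automatically satisfy $\int PQ>0$.

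Once both conditions are in hand, Proposition \ref{L2critical2} delivers orbital instability of $\mathbf{Q}$ under the flow of \eqref{eq:rabi}. There is essentially no hard step: the only place where a short argument is needed is ruling out that one of the components vanishes, which is forced by the Rabi coupling $\lambda\neq 0$. All the substantive work has been absorbed into the general criterion of Proposition \ref{L2critical2} and into the Schwarz-symmetrization observation recalled just before the statement.
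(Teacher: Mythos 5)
Your proposal is correct and follows essentially the same route as the paper: cast \eqref{eq:rabi} in the form \eqref{sistema} with $d=2$ so that the quartic part is $L^2$-critical and $N_2=-2\lambda\,\partere\int u\bar v$, use Schwarz symmetrization (valid since $k_{12}>0$) to get $P,Q$ real, nonnegative and radially decreasing, hence $\int PQ\neq 0$, and then invoke Proposition \ref{L2critical2} with $\lambda_1\omega_1\int P^2\neq\lambda_2\omega_2\int Q^2$ reducing to $\int P^2\neq\int Q^2$. Your short extra argument ruling out a vanishing component via the Rabi coupling $\lambda\neq 0$ is a detail the paper merely asserts, and it is a welcome addition.
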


\bigskip

\noindent
As it should be clear from the above examples, \textit{the existence of linear terms may induce unstable behavior through the mass-transfer instability}. When there is a gauge invariance for each individual component, these linear terms may be absorbed using a simple change of variables. In the cases considered, however, the presence of a single, complete gauge invariance prevents this procedure.

\section{Weak instability of ground-states}
We recall the action functional
$$S(\mathbf{u}):=M(\mathbf{u})+H(\mathbf{u})$$
and the set of minimal bound-states
$$
\mathcal{B}_0 = \left\{ \mathbf{Q}\neq 0\mbox{ bound-state }: \mathbf{Q} \mbox{ is a local minimum of }S\mbox{ over a manifold }\mathcal{V}\subset(H^1(\er^d))^m\mbox{ of codimension }1  \right\}.
$$
\begin{nota}
	Observe that $\mathbf{Q}$ is a bound-state if and only if $S'(\mathbf{Q})=0$. Indeed, taking $\mathbf{u}=e^{i\omega t}\mathbf{Q}=(e^{iw_1t}Q_1(x),\dots e^{iw_mt}Q_m(x))$, 
	$$\mathbf{u}_t=JH'(\mathbf{u})\Leftrightarrow \forall j,\,iw_je^{i\omega_j t}Q_j=\frac 1{i\lambda_j}e^{i\omega_j t}\frac{\partial H}{\partial u_j}(\mathbf{Q})\Leftrightarrow \forall j,\frac{\partial }{\partial u_j}(M+H)(\mathbf{Q})=0\Leftrightarrow S'(\mathbf{Q})=0.$$
\end{nota}
%
As previously explained, one needs to study the stability properties modulo the 
invariance
$$\mathbf{Q}\mapsto f(\theta,y)\mathbf{Q}:=(e^{is\omega_1}Q_1(\cdot+y),\dots,e^{is\omega_m}Q_m(\cdot+y)),\quad \theta\in \er,\ y\in \er^d.$$
We therefore define the orbit of a bound-state $\mathbf{Q}$ as
$$\mathcal{O}_\mathbf{Q}=\{f(\theta,y)\mathbf{Q},\,(\theta,y)\in\er\times \er^d\}.$$
Since the total mass is invariant under both the dynamical flow and  the gauge and translation invariances, a great part of the analysis shall be performed on 
$$\mathcal{M}_\mathbf{Q}=\{\mathbf{u}\in (L^2(\er^d))^m\,:\,M(\mathbf{u})=M(\mathbf{Q})\}.$$
To simplify the exposition, we will drop the subscript and write $\mathcal{M}=\mathcal{M}_\mathbf{Q}$.


%

\vskip10pt 
 Finally, we say that a bound-state $\mathbf{Q}$ is orbitally unstable if there exist solutions of $\eqref{sistema}$ with initial data near $\mathbf{Q}$ which move away from the orbit of $\mathbf{Q}$. More precisely:

\begin{defi}[Orbital instability]
A bound-state $\mathbf{Q}$ is said to be orbitally unstable by the flow of \eqref{sistema} if there exist $\epsilon>0$ and a sequence $(\mathbf{u}_0)_k\to \mathbf{Q}$ in $\in (H^1(\er^d))^m$ such that 
the solution $\mathbf{u}_k$ of \eqref{sistema} with initial data $(\mathbf{u}_0)_k$ satisfies
		$$T_k:=\sup\{t\,:\,d(\mathbf{u}_k(t),\mathcal{O}_\mathbf{Q}))<\epsilon\}<+\infty.$$
\end{defi}


%

\subsection{An orbital instability condition}

\noindent
In this paragraph, we prove the following orbital instability condition:
\begin{Teorema}
\label{gon}	
	 Let $\mathbf{Q}\in \mathcal{B}_0$. If there exists $\Psi\in T_\mathbf{Q}\mathcal{M}$ such that
	\begin{itemize}
		\item $J^{-1}\Psi$ is $L^2$-orthogonal to $J^{-1}\omega\mathbf{Q}$ and to $\partial_{x_j}\mathbf{Q}$ for all $j=1, \dots, d$;
		\item For all $j=1,\dots, d$, $J^{-1}\omega \mathbf{Q}$ and  $\partial_{x_j}\mathbf{Q}$ are linearly independent;
		\item $\langle S''(\mathbf{Q})\Psi,\Psi\rangle_{H^{-1}\times H^1}<0$,
	\end{itemize}
	then $\mathbf{Q}$ is orbitally unstable.
\end{Teorema}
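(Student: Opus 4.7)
The plan is to argue by contradiction, combining a descent curve on the mass manifold with a Grillakis--Shatah--Strauss-style Lyapunov argument driven by the unstable direction $\Psi$. First, I would build a smooth curve $\Gamma:(-\delta,\delta)\to(H^1(\er^d))^m$ with $\Gamma(0)=\mathbf{Q}$, $\Gamma'(0)=\Psi$, and $\Gamma(s)\in\mathcal{M}$ for every $s$; this is a direct application of the implicit function theorem to the mass functional $M$, using $\Psi\in T_\mathbf{Q}\mathcal{M}$. Since $\mathbf{Q}$ is a bound-state one has $S'(\mathbf{Q})=0$, so a Taylor expansion gives
$$
S(\Gamma(s))=S(\mathbf{Q})+\tfrac{s^2}{2}\langle S''(\mathbf{Q})\Psi,\Psi\rangle+o(s^2)<S(\mathbf{Q})
$$
for every small $s\neq 0$, providing initial data $\Gamma(s_k)\to\mathbf{Q}$ lying on $\mathcal{M}$ with strictly smaller action.

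Next, assuming orbital stability, the solutions $\mathbf{u}_k$ starting from $\Gamma(s_k)$ remain in a fixed $H^1$-neighborhood of $\mathcal{O}_\mathbf{Q}$, while $M$ and $S$ are conserved. By the periodicity of the orbit and the assumed linear independence of $J^{-1}\omega\mathbf{Q}$ and the $\partial_{x_j}\mathbf{Q}$, a standard implicit-function modulation produces continuous parameters $(\theta_k(t),y_k(t))$ such that
$$
\mathbf{w}_k(t):=f(\theta_k(t),y_k(t))^{-1}\mathbf{u}_k(t)-\mathbf{Q}
$$
is small in $H^1$ and $J^{-1}\mathbf{w}_k(t)$ is $L^2$-orthogonal to $J^{-1}\omega\mathbf{Q}$ and to each $\partial_{x_j}\mathbf{Q}$, that is, $\mathbf{w}_k(t)$ satisfies the same orthogonality conditions imposed on $\Psi$, together with $M(\mathbf{Q}+\mathbf{w}_k(t))=M(\mathbf{Q})$ and $S(\mathbf{Q}+\mathbf{w}_k(t))<S(\mathbf{Q})$. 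I would then introduce a Lyapunov-type functional derived from $\Psi$ --- roughly of the form $A(\mathbf{u})=\langle J^{-1}\Psi,\mathbf{u}\rangle_{L^2}$, possibly corrected by lower-order terms --- compute $\dot A$ along the flow using $\mathbf{u}_t=JH'(\mathbf{u})$, and expand to second order in $\mathbf{w}_k$. Exploiting the codimension-$1$ minimality of $\mathbf{Q}$ on $\mathcal{V}$, which forces $S''(\mathbf{Q})$ to have at most one negative eigenvalue, together with the orthogonality and mass constraints on $\mathbf{w}_k$, one expects $\dot A$ to admit a strictly positive lower bound, contradicting the boundedness of $A(\mathbf{u}_k(t))$ that follows from proximity of $\mathbf{u}_k(t)$ to $\mathcal{O}_\mathbf{Q}$.

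The main obstacle is precisely this last step: since $\Psi\in T_\mathbf{Q}\mathcal{M}$ is itself a negative direction of $S''(\mathbf{Q})$, the mass constraint alone does not eliminate the unstable mode. One must therefore extract from the abstract codimension-$1$ minimality a precise spectral picture of $S''(\mathbf{Q})$ --- effectively identifying $\Psi$ with the unique unstable direction modulo the orbit kernel and verifying that the modulation orthogonalities, combined with the mass condition, still yield a definite-sign quadratic lower bound on $\dot A$. Carrying this out without explicit access to the spectrum of $S''(\mathbf{Q})$ is the central difficulty and is where the minimality argument of \cite{ribeiro} must be delicately adapted to the symplectic, vector-valued setting considered here.
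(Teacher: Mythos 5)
Your skeleton matches the paper's: a Lyapunov functional $A(\mathbf{u})=\langle J^{-1}\Psi,G(\mathbf{u})\rangle_{L^2}$ built from $\Psi$ (with $G$ the modulation/projection onto the slice orthogonal to the orbit, justified exactly as you say by the implicit function theorem and the linear independence of $J^{-1}\omega\mathbf{Q}$ and the $\partial_{x_j}\mathbf{Q}$), the identity $\frac{d}{dt}A(\mathbf{u}(t))=P(\mathbf{u}(t)):=\langle H'(\mathbf{u}),JA'(\mathbf{u})\rangle$, and a contradiction with the boundedness of $A$ on a tubular neighborhood of $\mathcal{O}_\mathbf{Q}$. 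But the step you yourself flag as the ``central difficulty'' --- a strictly positive lower bound for $P$ along the flow --- is precisely the content of the theorem, and the route you sketch for it (extracting a spectral picture of $S''(\mathbf{Q})$ from the codimension-$1$ minimality and hoping for a ``definite-sign quadratic lower bound on $\dot A$'') would not work as stated. Indeed $P(\mathbf{Q})=\langle H'(\mathbf{Q}),\Psi\rangle=-\langle M'(\mathbf{Q}),\Psi\rangle=0$ and the first variation $\langle P'(\mathbf{Q}),\mathbf{w}\rangle$ is linear in the perturbation, so $P$ changes sign in every neighborhood of $\mathbf{Q}$; no pointwise quadratic coercivity is available, and the paper never performs any spectral analysis of $S''(\mathbf{Q})$.

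What the paper does instead is entirely non-spectral. It introduces the auxiliary Hamiltonian flow $\Lambda(t)$ generated by $A$ itself, $\mathbf{z}_t=JA'(\mathbf{z})$, which conserves $M$, and proves the concavity inequality $H(\Lambda(t)\mathbf{v})-H(\mathbf{v})<tP(\mathbf{v})$ for small $t\neq0$ uniformly on $\mathcal{O}_{\mathbf{Q},\delta}$; this is a Taylor expansion whose second-order coefficient is exactly $\frac12\langle S''(\mathbf{Q})\Psi,\Psi\rangle<0$. Next, the hypotheses force $J^{-1}\Psi$ to be transverse to $\mathcal{V}$ (it is a negative direction of $S''(\mathbf{Q})$ while $S''(\mathbf{Q})\geq 0$ on $T_\mathbf{Q}\mathcal{V}$ by minimality), so for every $\mathbf{u}$ near $\mathbf{Q}$ there is $t_\mathbf{u}$ with $|t_\mathbf{u}|<\epsilon$ and $\Lambda(t_\mathbf{u})\mathbf{u}\in\mathcal{V}$. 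Combining $S(\mathbf{Q})\le S(\Lambda(t_\mathbf{u})\mathbf{u})$ (minimality over $\mathcal{V}$ plus conservation of $M$ by $\Lambda$) with the concavity inequality yields $H(\mathbf{Q})<H(\mathbf{u})+t_\mathbf{u}P(\mathbf{u})$. On the flow-invariant set $S_+=\{H<H(\mathbf{Q}),\,M=M(\mathbf{Q}),\,P>0\}$ this gives the uniform bound $P(\mathbf{u}(t))>\bigl(H(\mathbf{Q})-H(\mathbf{u}_0)\bigr)/\epsilon>0$: the lower bound is the conserved energy deficit divided by the uniformly small time $t_\mathbf{u}$, not a quadratic form in the modulated perturbation. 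This is the idea your proposal is missing, and without it the argument does not close.
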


We follow the main ideas  for the scalar case  presented in \cite{SS}, in the context of the nonlinear Schr\"odinger equation. We extend the arguments of Shatah and Strauss to more general semilinear Schrödinger systems and for bound-states in $\mathcal{B}_0$.

Throughout this section, $\mathbf{Q}\in\mathcal{B}_0$ will be fixed. We define the ${L^2}$-orthogonal hyperplane to the orbit as 
$$L=\{\mathbf{w}\in (L^2(\er^d))^m\,:\,\mathbf{w}\perp_{L^2} J^{-1}\omega\mathbf{Q}\textrm{ and }\forall j,\,\mathbf{w}\perp_{L^2}\partial_{x_j}\mathbf{Q}\}$$
and define the neighborhoods
$$L_{\delta}=\{\mathbf{w}\in (H^1(\er^d))^m\cap (\mathbf{Q}+L)\,:\,\|\mathbf{w}-\mathbf{Q}\|_{(H^1(\er^d))^m}<\delta\} \mbox{ and } \mathcal{O}_{\mathbf{Q},\delta}=\{f(\theta,y)L_{\delta},\,(\theta,y)\in\er^{d+1}\}.$$

\begin{Lemma}
	\label{Lema42}
There exists $\delta>0$ such that  $\forall (\theta,y)\in (0,2\pi/\tilde{\omega})\times \er^d$, 
$$L_{\delta}\cap f(\theta,y)L_{\delta}=\emptyset.$$
Here, $2\pi/\tilde{\omega}$ is the minimal period of $\mathbf{u}=e^{i\omega t}\mathbf{Q}$. As a consequence, $\mathcal{O}_{\mathbf{Q},\delta}$ is an open set in $(H^1(\er^d))^m$.
\end{Lemma}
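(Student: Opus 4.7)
The strategy is to realise $L$ as a local transverse slice to the orbit $\mathcal{O}_\mathbf{Q}$ via the Inverse Function Theorem, and then to extend the resulting local disjointness to the full parameter range $(0,2\pi/\tilde\omega)\times\mathbb{R}^d$ using a compactness argument together with the $L^2$-decay of $\mathbf{Q}$.

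The central technical step is to analyse the $C^1$ map
\[
\Phi:\mathbb{R}\times\mathbb{R}^d\times L\to (H^1(\mathbb{R}^d))^m,\qquad \Phi(\theta,y,\mathbf{v})=f(\theta,y)(\mathbf{Q}+\mathbf{v})-\mathbf{Q},
\]
whose Fréchet derivative at the origin is $(\theta,y,\mathbf{v})\mapsto \theta\, i\omega\mathbf{Q}+\sum_j y_j\,\partial_{x_j}\mathbf{Q}+\mathbf{v}$. To prove this is a linear isomorphism onto $(H^1(\mathbb{R}^d))^m$, I test against $J^{-1}\omega\mathbf{Q}$ and each $\partial_{x_k}\mathbf{Q}$. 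Since $\mathbf{Q}$ is real-valued, $J^{-1}\omega\mathbf{Q}$ is purely imaginary, and the cross $L^2$-inner products $\langle\partial_{x_j}\mathbf{Q},J^{-1}\omega\mathbf{Q}\rangle$ and $\langle i\omega\mathbf{Q},\partial_{x_k}\mathbf{Q}\rangle$ vanish. The system decouples into determining $\theta$ from the nonzero quantity $\langle i\omega\mathbf{Q},J^{-1}\omega\mathbf{Q}\rangle=\sum_j\lambda_j\omega_j^2\int Q_j^2$ and $y$ from the Gram matrix of $\{\partial_{x_k}\mathbf{Q}\}_{k=1}^d$, which is non-singular by the linear independence hypothesis. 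The Inverse Function Theorem then produces $\eta,\delta_0>0$ such that $\Phi$ is a $C^1$-diffeomorphism from an open box $B_\eta\times B_\eta\times\{\|\mathbf{v}\|_{H^1}<\delta_0\}$ onto an open neighbourhood of $\mathbf{0}$ in $(H^1(\mathbb{R}^d))^m$.

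Given this diffeomorphism, suppose $\mathbf{w}\in L_\delta\cap f(\theta,y)L_\delta$ with $\delta<\delta_0$, and write $\mathbf{w}=\mathbf{Q}+\mathbf{v}_1=f(\theta,y)(\mathbf{Q}+\mathbf{v}_2)$ with $\mathbf{v}_i\in L$, $\|\mathbf{v}_i\|_{H^1}<\delta$, so that $\Phi(0,0,\mathbf{v}_1)=\Phi(\theta,y,\mathbf{v}_2)$. For $(\theta,y)\in B_\eta\bigl((0,0)\bigr)\cup B_\eta\bigl((2\pi/\tilde\omega,0)\bigr)$, after reducing $\theta$ modulo $2\pi/\tilde\omega$ using Assumption 2, injectivity on the box yields $(\theta,y)=(0,0)$, incompatible with $\theta\in(0,2\pi/\tilde\omega)$. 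For the complementary $(\theta,y)$, I would establish a uniform bound $\|f(\theta,y)\mathbf{Q}-\mathbf{Q}\|_{H^1}\geq 2c>0$: continuity plus the identification of the stabiliser of $\mathbf{Q}$ via Assumption 2 handles bounded $y$ on the compact $\theta$-set, while the identity
\[
\|f(\theta,y)\mathbf{Q}-\mathbf{Q}\|_{L^2}^2=2\|\mathbf{Q}\|_{L^2}^2-2\sum_j\cos(\theta\omega_j)\int Q_j(x+y)Q_j(x)\,dx
\]
converges to $2\|\mathbf{Q}\|_{L^2}^2$ as $|y|\to\infty$, uniformly in $\theta$, by the $L^2$-decay of each $Q_j$. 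Taking $\delta<c$ and invoking the triangle inequality then forces $L_\delta\cap f(\theta,y)L_\delta=\emptyset$.

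Openness of $\mathcal{O}_{\mathbf{Q},\delta}$ follows directly from the local diffeomorphism: around any $\mathbf{u}=f(\theta_0,y_0)\mathbf{w}_0\in\mathcal{O}_{\mathbf{Q},\delta}$, the $\Phi$-image of a small box is an $H^1$-open neighbourhood of $\mathbf{w}_0$ whose points all have the form $f(\theta,y)\mathbf{w}$ with $\mathbf{w}\in L_\delta$, and composing with the $H^1$-isometry $f(\theta_0,y_0)$ gives an open neighbourhood of $\mathbf{u}$ inside $\mathcal{O}_{\mathbf{Q},\delta}$. The main obstacle I anticipate is the verification that the derivative of $\Phi$ is surjective, because it relies on a non-degeneracy condition that is not an explicit hypothesis of the theorem but rather follows from the total-mass positivity $\lambda_j\omega_j>0$; a secondary delicate point is the uniform-in-$\theta$ vanishing of the correlation $\int Q_j(x)Q_j(x+y)\,dx$ as $|y|\to\infty$, which depends only on $L^2$-decay but must be combined with the group action cleanly.
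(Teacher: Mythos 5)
Your proof is correct and follows essentially the same route as the paper: an Inverse/Implicit Function Theorem argument near the identity of the group action (using that $i\omega\mathbf{Q}$ pairs non-trivially with $J^{-1}\omega\mathbf{Q}$ and that $\{\partial_{x_j}\mathbf{Q}\}$ is independent), combined with triviality of the stabiliser on compact parameter sets and $L^2$-decorrelation of translates for $|y|\to\infty$. The only difference is presentational: you establish a uniform positive lower bound for $\|f(\theta,y)\mathbf{Q}-\mathbf{Q}\|_{L^2}$ away from the identity, whereas the paper reaches the same conclusion by a sequential contradiction argument with $\delta_n\to 0$.
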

\begin{nota}
	The existence of  $\tilde{\omega}$ is ensured by Assumption 2.
\end{nota}
\begin{proof}
	\noindent
	Let  $\delta>0$ and $s_0\equiv 0 \mod 2\pi/\tilde{\omega}$. We set
	$$
	F(\theta,y,\mathbf{z},\mathbf{w})=\mathbf{z}-f(s,y)\mathbf{w}, \quad (\theta,y,\mathbf{z},\mathbf{w})\in\er\times \er^d\times L_{\delta}\times L_{\delta}.
	$$
	Observe that $F(0,0,\mathbf{Q},\mathbf{Q})=0$ and that the Jacobian matrix of $F$ at $(0,0,\mathbf{Q},\mathbf{Q})$ with respect to $s,\mathbf{w}$ and $\mathbf{z}$ is given by
	$$
	\left[\begin{array}{llllll}
	J^{-1}\omega\mathbf{Q}&0&0&\dots&0\\
	0&\partial_{x_1}\mathbf{Q}&0&\dots&0\\
	0&0&\dots&\dots&0\\
	0&0&\dots&\partial_{x_n}\mathbf{Q}&0\\
	0&0&0&\dots&\mathbf{Q}
	\end{array}
	\right].$$
	Since $\mathbf{Q}\in \langle J^{-1}\omega \mathbf{Q}, \partial_{x_1}\mathbf{Q},\dots,\partial_{x_n}\mathbf{Q}\rangle^{\perp}$ and $J^{-1}\omega\mathbf{Q}, \partial_{x_1}\mathbf{Q},\dots,\partial_{x_n}\mathbf{Q}$ are linearly independent, this matrix is invertible. Hence, by the Implicit Function Theorem, for $\delta$ small enough and $\mathbf{z}\in L_{\delta}$, there exists a unique $(\theta,y,\mathbf{w})$, small in $(\ze/(2\pi/\tilde{\omega})\ze)\times\er^d\times L_{\delta}$, such that $\mathbf{z}=f(\theta,y)\mathbf{w}$. The choice is obviously $(\theta,y,\mathbf{w})=(0,0,\mathbf{z})$ and the Lemma is proved for $\delta$ small enough and $(\theta,y)$ in a neighbourhood of $(0,0)\in (\ze/(2\pi/\tilde{\omega})\ze)\times\er.$    
	
	\bigskip
	
	\noindent
	Now, by contradiction, let us assume the existence of sequences $\delta_n\to 0$, $(\theta_n,y_n)\in [0,2\pi/\tilde{\omega})\times \er^d$ and  $(\mathbf{w}_n)_{n\in \mathbb{N}}\in L_{\delta_n}$ such that $\mathbf{z}_n=f(\theta_n,y_n) \mathbf{w}_n\in L_{\delta_n}$. By definition of $L_{\delta}$, $\mathbf{w}_n\to \mathbf{Q}$  and $\mathbf{z}_n\to \mathbf{Q}$ in $H^1(\er^d)$. Also, since $(\theta_n)_{n\in\mathbb{N}}$ is bounded, we may assume that $\theta_n\to \theta_0\in [0,2\pi/\tilde{\omega}]$.
	
	\bigskip
	
	\noindent
	If $(y_n)_{n\in\mathbb{N}}$ is also bounded, then $y_n\to y_0$ up to a subsequence. Since $L_{\delta}\cap f(\theta,y)L_{\delta}=\emptyset$ for small $\theta,y$ and $\delta$, $(\theta_0,y_0)\notin \{0, 2\pi/\tilde{\omega}\}\times\{0\}$. Also,
	$$\|\mathbf{Q}-\mathbf{z}_n\|_{L^2}=\|\mathbf{Q}-f(\theta_n,y_n)\mathbf{w}_n\|_{L^2}=\|f(-\theta_n,-y_n)\mathbf{Q}-\mathbf{w}_n\|_{L^2}\to \|f(-\theta_0,-y_0)\mathbf{Q}-\mathbf{Q}\|_{L^2}=0.$$  
	Hence $\mathbf{Q}=f(\theta_0,y_0)\mathbf{Q}$, which contradicts the fact that 
	$(\theta_0,y_0)\notin \{0, 2\pi/\tilde{\omega}\}\times\{0\}$.
	
	\medskip
	
	\noindent 
	On the other hand, if $(y_n)_{n\in\mathbb{N}}$ is unbounded,  $|y_n|\to +\infty$ up to a subsequence. Then,
	$$\|\mathbf{Q}-\mathbf{z}_n\|_{L^2}^2=\|\mathbf{Q}\|_{L^2}^2+\|\mathbf{w}_n\|_{L^2}^2-2\int f(-\theta_n,-y_n)\mathbf{Q}\mathbf{w}_n$$
	\begin{equation}
	\label{contra}
	=\|\mathbf{Q}\|_{L^2}^2+\|\mathbf{w}_n\|_{L^2}^2-2\int f(-\theta_n,-y_n)\mathbf{Q}(\mathbf{w}_n-\mathbf{Q})-2\int f(-\theta_n,-y_n)\mathbf{Q}\mathbf{Q}.
	\end{equation}
	It is clear that $\|\mathbf{w}_n\|_{L^2}\to \|\mathbf{Q}\|_{L^2}$ and $\displaystyle \int f(-\theta_n,-y_n)\mathbf{Q}(\mathbf{w}_n-\mathbf{Q})\to 0$. Furthermore,
	$$\int f(-\theta_n,-y_n)\mathbf{Q}\mathbf{Q}=e^{-i\theta_n\omega}\int \mathbf{Q}(x-y_n)\mathbf{Q}(x)dx\to 0$$
	since $\mathbf{Q}\in (L^2(\er^d))^m$. The contradiction now follows from \eqref{contra}.
\end{proof}

\noindent
We now consider a smooth path $\Gamma\,:t\in[0,\epsilon[\to \Gamma(t)\in \mathcal{M}$ with $\Gamma(0)=\mathbf{Q}$ and $\Gamma'(0)=\mathbf{\Psi}$. We define the projection of the orbital neighborhood onto the orthogonal neighborhood $G: \mathcal{O}_{\mathbf{Q},\delta} \mapsto L_{\delta} $ as
$$ G(\mathbf{w})=\tilde{\mathbf{w}}\in L_{\delta}, \mbox{ where }\mathbf{w}=f(\theta,y)\tilde{\mathbf{w}}, \mbox{ for some }(\theta,y)\in [0,2\pi/\tilde{\omega})\times\er^d.$$
Notice that $G$ is well-defined: if $\mathbf{w}=f(\theta_1,y_1)\mathbf{w}_1=f(\theta_2,y_2)\mathbf{w}_2$ with $\mathbf{w}_1,\mathbf{w}_2\in L_\delta$, then $\mathbf{w}_1=f(\theta_2-\theta_1,y_2-y_1)\mathbf{w}_2$ and $(\theta_1,y_1)=(\theta_2,y_2)$ by Lemma \ref{Lema42}.
\vskip10pt
\noindent
We set $$ A(\mathbf{w})=\langle J^{-1}\mathbf{\Psi},G(\mathbf{w})\rangle_{L^2},\quad \mbox{ for }\mathbf{w}\in \mathcal{O}_{\mathbf{Q},\delta}.$$
Notice that, by definition of $L$, 
$$(L^2(\er^d))^m=L\oplus\langle J^{-1}\omega\mathbf{Q}\rangle\oplus_j\langle\partial_{x_j}\mathbf{Q}\rangle.$$
Since $A$ is invariant by the action of $f(s,y)$, it is constant along  $\mathbf{Q}+\langle J^{-1}\omega\mathbf{Q}\rangle=\mathbf{Q}+\langle\partial_\theta f(\theta,y)\mathbf{Q}|_{(0,0)}\rangle$ and $\mathbf{Q}+\langle\partial_{x_j}\mathbf{Q}\rangle=\mathbf{Q}+\langle\partial_{y_j} f(\theta,y)\mathbf{Q}|_{(0,0)}\rangle$. Hence  $A'(\mathbf{Q})=A|_{L}'(\mathbf{Q})$, and, for all $\gamma\in L$, 
$$\langle A|_L'(\mathbf{Q}),\gamma\rangle_{L^2}=\frac d{dt}A(\mathbf{Q}+t\gamma)|_{t=0}=\frac d{dt}\langle J^{-1}\mathbf{\Psi},\mathbf{Q}+t\gamma\rangle_{L^2}|_{t=0}=\langle J^{-1}\mathbf{\Psi},\gamma\rangle_{L^2}.$$
Recalling that $J^{-1}\Psi \in L$, we conclude that $A'(\mathbf{Q})=J^{-1}\mathbf{\Psi}$. Moreover, since, by Assumption 3, $\mathbf{\Psi}\in (H^1(\er^d))^m$, one may easily check that $A\in C^1((H^1(\er^d))^m)$.

\bigskip

\noindent
Consider, for $\mathbf{v}\in \mathcal{O}_{\mathbf{Q},\delta}$, the flow $\mathbf{z}=\Lambda(t)\mathbf{v}$ generated by the pseudo-Hamiltonian system 
\begin{equation}
\label{dual}
\left\{
\begin{array}{llll}
\mathbf{z}_t=JA'(\mathbf{z})\\
\mathbf{z}(0)=\mathbf{v}
\end{array}
\right..
\end{equation}

\begin{nota}
	The mass $M$ is conserved by the flow $\Lambda$. Indeed, notice that for all $\mathbf{u}\in \mathcal{O}_{\mathbf{Q},\delta}$,
	$$0=\frac d{ds} A(e^{i\omega s}\mathbf{u})|_{s=0}=\langle A'(\mathbf{u}),i\omega \mathbf{u}\rangle_{L^2}=\langle A'(\mathbf{u}),-JM'(\mathbf{u})\rangle_{L^2}$$
	and so, taking $\mathbf{u}=\Lambda(t)\mathbf{v}$, 
	$$\frac d{dt} M(\Lambda(t)\mathbf{v})=\langle M'(\Lambda(t)\mathbf{v}),JA'(\Lambda(t)\mathbf{v})\rangle_{L^2}=0.$$
\end{nota}
\vskip10pt
\noindent Set
 $$P\,:\,\mathbf{w}\in \mathcal{O}_{\mathbf{Q},\delta}\to P(\mathbf{w})=\langle H'(\mathbf{w}),JA'(\mathbf{w})\rangle_{H^{-1}\times H^1}.$$
Due to Assumption 1, $P$ is well-defined and is continuous. We have the following result:
\begin{Lemma}\label{Lema43}
For $\mathbf{w}\in \mathcal{O}_{\mathbf{Q},\delta}$, $\delta$ small enough, and for small $t\neq0$, 
\begin{equation}
\label{ineqexpo}
H(\Lambda(t)\mathbf{v})-H(\mathbf{v})<tP(\mathbf{v}).
\end{equation}
\end{Lemma}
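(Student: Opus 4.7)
The plan is to view \eqref{ineqexpo} as a strict concavity statement at $t=0$ for the auxiliary function
\[
g(t):=H(\Lambda(t)\mathbf{v})-H(\mathbf{v})-tP(\mathbf{v}).
\]
By construction $g(0)=0$, and since $\frac{d}{dt}H(\Lambda(t)\mathbf{v})=\langle H'(\Lambda(t)\mathbf{v}),JA'(\Lambda(t)\mathbf{v})\rangle=P(\Lambda(t)\mathbf{v})$, one has $g'(t)=P(\Lambda(t)\mathbf{v})-P(\mathbf{v})$, so that $g'(0)=0$ as well. Consequently \eqref{ineqexpo} will follow, via the Taylor identity $g(t)=\tfrac{t^{2}}{2}g''(\tau)$ for some $\tau$ between $0$ and $t$, once we establish $g''(0)<0$ uniformly for $\mathbf{v}$ in a sufficiently small $\mathcal{O}_{\mathbf{Q},\delta}$.

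The central computation is the value of $g''(0)$ at $\mathbf{v}=\mathbf{Q}$. From the earlier identity $A'(\mathbf{Q})=J^{-1}\mathbf{\Psi}$ we obtain $JA'(\mathbf{Q})=\mathbf{\Psi}$. Differentiating $g'(t)=P(\Lambda(t)\mathbf{v})-P(\mathbf{v})$ at $t=0$ and applying Leibniz to $P(\mathbf{w})=\langle H'(\mathbf{w}),JA'(\mathbf{w})\rangle$ yields
\[
g''(0)\big|_{\mathbf{v}=\mathbf{Q}}=\langle H''(\mathbf{Q})\mathbf{\Psi},\mathbf{\Psi}\rangle+\langle H'(\mathbf{Q}),(JA')'(\mathbf{Q})\mathbf{\Psi}\rangle.
\]
The bound-state equation gives $H'(\mathbf{Q})=-M'(\mathbf{Q})$, and the conservation law $M\circ\Lambda(t)\equiv M$ from the Remark preceding the lemma, differentiated twice at $t=0$, produces
\[
\langle M''(\mathbf{Q})\mathbf{\Psi},\mathbf{\Psi}\rangle+\langle M'(\mathbf{Q}),(JA')'(\mathbf{Q})\mathbf{\Psi}\rangle=0.
\]
Combining these, the cross term is converted into $\langle M''(\mathbf{Q})\mathbf{\Psi},\mathbf{\Psi}\rangle$, so that
\[
g''(0)\big|_{\mathbf{v}=\mathbf{Q}}=\langle (H''+M'')(\mathbf{Q})\mathbf{\Psi},\mathbf{\Psi}\rangle=\langle S''(\mathbf{Q})\mathbf{\Psi},\mathbf{\Psi}\rangle<0
\]
by the negativity hypothesis of Theorem \ref{gon}.

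With the strict inequality established at $\mathbf{v}=\mathbf{Q}$, continuity of $\mathbf{v}\mapsto g''(0)$ on $(H^{1}(\er^{d}))^{m}$---a consequence of Assumptions 1 and 3 together with the $C^{1}$ regularity of $A$ produced by the implicit function theorem in Lemma \ref{Lema42}---propagates it to all $\mathbf{v}\in\mathcal{O}_{\mathbf{Q},\delta}$ provided $\delta$ is small, and joint continuity of $(t,\mathbf{v})\mapsto g''(t)$ gives a uniform $t_{0}>0$ below which $g''(\tau)<0$, concluding the proof. The step I expect to be most delicate is the cross term $\langle H'(\mathbf{Q}),(JA')'(\mathbf{Q})\mathbf{\Psi}\rangle$, which has no obvious sign: the argument hinges on the fact that the \emph{twice-differentiated} mass conservation identity converts it into $\langle M''(\mathbf{Q})\mathbf{\Psi},\mathbf{\Psi}\rangle$, so that $g''(0)$ collapses to the single hypothesis $\langle S''(\mathbf{Q})\mathbf{\Psi},\mathbf{\Psi}\rangle<0$ of Theorem \ref{gon}. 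Justifying this double differentiation rigorously in the $H^{-1}\times H^{1}$ pairing is the other technical subtlety, resolved by Assumption 3.
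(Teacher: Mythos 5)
Your proposal is correct and follows essentially the same route as the paper: a second-order Taylor expansion of $t\mapsto H(\Lambda(t)\mathbf{v})$ whose linear term is $tP(\mathbf{v})$, with the quadratic coefficient at $\mathbf{v}=\mathbf{Q}$ collapsed to $\langle S''(\mathbf{Q})\Psi,\Psi\rangle<0$ by combining the twice-differentiated mass conservation along $\Lambda$ with $H'(\mathbf{Q})=-M'(\mathbf{Q})$, and the strict inequality then extended to a neighborhood by continuity. The only cosmetic difference is your use of the Lagrange form of the remainder where the paper writes $o(t^2)$ and invokes continuity afterwards.
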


\begin{proof}
\noindent
The Taylor expansion of $H(\Lambda(t)\mathbf{Q})$ at $t=0$ reads
$$H(\Lambda(t)\mathbf{Q})=H(\mathbf{Q})+t\langle H'(\mathbf{Q}),JA'(\mathbf{Q})\rangle_{L^2}+\frac 12t^2(\langle H''(\mathbf{Q})A'(\mathbf{Q}),A'(\mathbf{Q})\rangle_{L^2}+\langle H'(\mathbf{Q}),\Lambda''(0)\rangle_{L^2})+o(t^2)$$
\begin{equation}
\label{iii}
=H(\mathbf{Q})+t\langle H'(\mathbf{Q}),\Psi\rangle_{L^2}+\frac 12t^2(\langle H''(\mathbf{Q})\Psi,\Psi\rangle_{L^2}+\langle H'(\mathbf{Q}),\Lambda''(0)\rangle_{L^2})+o(t^2).
\end{equation}

\medskip
\noindent Since $M$ is conserved by $\Lambda$,
$$\langle M'(\mathbf{Q}), \Psi\rangle_{L^2}=\langle M''(\mathbf{Q})\Psi,\Psi\rangle_{L^2}+\langle M'(\mathbf{Q}),\Lambda''(0)\rangle_{L^2}=0.$$
Adding these terms to \eqref{iii} and recalling that $S'(\mathbf{Q})=0$,
$$H(\Lambda(t)\mathbf{Q})=H(\mathbf{Q})+tP(\mathbf{Q}) +  \frac 12t^2\langle S''(\mathbf{Q})\Psi,\Psi\rangle_{L^2}+o(t^2).$$
Therefore, for some $C>0$,
$$
H(\Lambda(t)\mathbf{Q})-H(\mathbf{Q})-tP(\mathbf{Q})<-Ct^2, \quad t\mbox{ small.}
$$
By continuity,
$$
H(\Lambda(t)\mathbf{u})-H(\mathbf{u})-tP(\mathbf{u})\le-Ct^2, \quad \mathbf{u}\in\mathcal{O}_{\mathbf{Q},\delta},\  t\mbox{ small}
$$
and the Lemma is proved. 
\end{proof}

\noindent
Let $\mathcal{V}$ be the codimension one manifold on which $\mathbf{Q}$ minimizes the action $S$. Observe that $J^{-1}\Psi$ is transverse to the manifold $\mathcal{V}$: on one hand, $\langle S''(\mathbf{Q})J^{-1}\Psi,J^{-1}\Psi\rangle_{L^2}=\langle S''(\mathbf{Q})\Psi,\Psi\rangle_{L^2}<0$;
on the other hand, $\mathbf{Q}$ is a minimum of the action over $\mathcal{V}$, and therefore, considering the projection $J^{-1}\tilde{\Psi}$ of $J^{-1}\Psi$ on the tangent space $T_{\mathbf{Q}}\mathcal{V}$, $\langle S''(\mathbf{Q})J^{-1}\tilde{\Psi},J^{-1}\tilde{\Psi}\rangle_{L^2}\geq 0$.
\vskip10pt\noindent
Set $\mathcal{K}=\mathcal{O}_{\mathbf{Q},\delta}\setminus\mathcal{V}$ and fix $\mathbf{u}\in \mathcal{K}\cap \mathcal{M}$.
We claim that there exists $\epsilon>0$ and $t_{\mathbf{u}}\in ]-\epsilon,\epsilon[$ with 
\begin{equation}
\label{it}
H(\mathbf{Q})<H(\mathbf{u})+t_\mathbf{u}P(\mathbf{u}).
\end{equation}
Indeed, as a consequence of the transversality of $A'(\mathbf{Q})=J^{-1}\Psi$ with respect to $\mathcal{V}$, the Implicit Function Theorem implies that, for $\epsilon>0$ small and $\mathbf{v}\in \mathcal{O}_{\mathbf{Q},\delta}$, there exists $t_{\mathbf{v}}\in ]-\epsilon,\epsilon[$ such that $\Lambda(t_\mathbf{v})\mathbf{v}\in \mathcal{V}$. Since $\mathbf{u}\in \mathcal{K}$, then necessarily $t_{\mathbf{u}}\neq 0$. It follows from Lemma \ref{Lema43} that
\begin{equation}
 H(\Lambda(t_\mathbf{u})\mathbf{u})<H(\mathbf{u})+t_\mathbf{u}P(\mathbf{u}).
\end{equation}
The minimality of $\mathbf{Q}$, together with $M(\Lambda(t_\mathbf{u})\mathbf{u})=M(\mathbf{u})=M(\mathbf{Q})$, implies that
$$
H(\mathbf{Q})+M(\mathbf{Q})=S(\mathbf{Q})\le S(\Lambda(t_\mathbf{u})\mathbf{u}) = H(\Lambda(t_\mathbf{u})\mathbf{u}) = H(\Lambda(t_\mathbf{u})\mathbf{u})+M(\mathbf{Q}).
$$
The claim now follows from the two previous inequalities.
\noindent
%
%

\vskip10pt

\noindent 
\begin{proof}[Proof of Theorem \ref{gon}]

Consider  $S_+:=\{\mathbf{u}\in \mathcal{O}_{\mathbf{Q},\delta}\,:\,H(\mathbf{u})<H(\mathbf{Q}), M(u)=M(\mathbf{Q}),P(u)>0\}.$ Since $\mathbf{Q}$ is a local minimum of $S$ over $\mathcal{V}$, $\mathcal{V}\cap S_+=\emptyset$, meaning that $S_+\subset \mathcal{K}\cap \mathcal{M}$. If $u_0\in S_+$, by the conservation of $M$ and $H$ and in view of \eqref{it},
$$0< H(\mathbf{Q})-H(\mathbf{u}(t))<t_{\mathbf{u}(t)}P(\mathbf{u}(t)),\quad t_{\mathbf{u}}\neq 0.$$
Hence $P(\mathbf{u}(t))\neq 0$ and, by continuity, $P(\mathbf{u}(t))>0$, that is, $\mathbf{u}\in S_+$. Hence $S_+$ is conserved by the flow generated by \eqref{sistema}.

We may now conclude has in \cite{SS}: for $\mathbf{u}_0\in S_+$ with $\|\mathbf{u}_0-\mathbf{Q}\|_{(H^1(\er^d))^m}\ll \delta$,
$$\frac{d}{dt}A(\mathbf{u}(t))=\langle \mathbf{u}_t,A'(\mathbf{u}(t))\rangle_{H^{-1}\times H^1}=\langle JH'(\mathbf{u}(t)),J^{-1}\mathbf{u}_t\rangle_{H^{-1}\times H^1}=\langle JH'(\mathbf{u}(t)),A'(\mathbf{u}(t))\rangle_{H^{-1}\times H^1}=P(\mathbf{u}(t)).$$
Since 
$$P(\mathbf{u}(t))>\frac{H(\mathbf{Q})-H(\mathbf{u}_0)}{t_{\mathbf{u}}}>\frac{H(\mathbf{Q})-H(\mathbf{u}_0)}{\epsilon}>0,$$ one has $$\lim_{t\to +\infty}|A(\mathbf{u}(t))|=+\infty.$$
The contradiction follows from
$|A(\mathbf{u}(t))|\leq \|\Psi\|_{L^2}\|G(\mathbf{u}(t))\|_{L^2}\leq \|\Psi\|_{L^2}(\|\mathbf{Q}\|_{H^1}+\delta)$.
\end{proof}

\subsection{Construction of an unstable direction - Proof of Theorem \ref{teo:principal}}
\noindent
In what follows, let us consider a real bound-state $\mathbf{Q}\in \mathcal{B}_0$ and
a smooth path $\Gamma\,:\,[0,\epsilon[\to \mathcal{M}$ given by
$$\Gamma(t)=\Big(\gamma_1(t)\lambda^{\frac d2}(t)Q_1(\lambda(t) x),\dots,\gamma_m(t)\lambda^{\frac d2}(t)Q_m(\lambda(t)x)\Big),$$
with $\Gamma(0)=\mathbf{Q}$ (i.e. $\lambda(0)=\gamma_j(0)=1)$. Our goal is to suitably choose $\lambda, \gamma_j$ so that $\Psi=\Gamma'(0)$ satisfies the conditions of Theorem \ref{gon}. Now:
\begin{itemize}
\item By Assumption 3, $\Psi\in (L^2(\er^d))^m$ and $\langle S''(\mathbf{Q})\Psi,\Psi\rangle_{H^{-1}\times H^1}$ is well-defined.
\item The condition $\Gamma(t)\in\mathcal{M}$, which implies that $\Psi$ is tangent to $\mathcal{M}$ at $\mathbf{Q}$, is equivalent to $\displaystyle \frac{d}{dt}M(\Gamma(t))=0$,
that is,
\begin{equation}
\label{ka1}
\sum_j \gamma_j(t)\gamma_j'(t)\lambda_j\omega_j\int Q_j^2=0.
\end{equation}
\item The fact that $J^{-1}\Psi$ has complex components immediately implies that $$\langle J^{-1}\Psi,\partial_{x_j}\mathbf{Q}\rangle_{L^2}=0.$$
Furthermore, $\langle J^{-1}\Psi,i\omega\mathbf{Q}\rangle_{L^2}=\langle\Psi,\nabla M(\mathbf{Q})\rangle_{L^2}=0.$
\item Again, since $\mathbf{Q}$ is a real bound-state, $J^{-1}\omega \mathbf{Q}$ is orthogonal to $\partial_{x_j}\mathbf{Q}$.
\end{itemize}
Hence, we only need to see that $\langle S''(\mathbf{Q})\Psi,\Psi\rangle_{H^{-1}\times H^1}<0$. We have
$$\frac{d}{dt}S(\Gamma(t))=\langle S'(\Gamma(t)), \Gamma'(t)\rangle$$ and
$$\frac{d^2}{dt^2}S(\Gamma(t))=\langle S'(\Gamma(t)), \Gamma''(t)\rangle+\langle S''(\Gamma'(t)), \Gamma'(t)\rangle.$$
Evaluating the above equality at $t=0$, since $\mathbf{Q}=\Gamma(0)$ is a bound-state, we get
$$\langle S''(\Psi),\Psi\rangle<0\Leftrightarrow \frac{d^2}{dt^2}S(\Gamma(t))|_{t=0}<0 \Leftrightarrow \frac{d^2}{dt^2}H(\Gamma(t))|_{t=0}<0$$
(recall that $\Gamma\subset \mathcal{M}$).
We now compute the Hamiltonian $H$ along the path $\Gamma$:
\begin{align*}
 H(\Gamma(t))&=\frac 12\sum_j \gamma_j^2(t)\lambda^2(t)\int |\nabla Q_j|^2 +\frac 12\sum_k \lambda^{\frac {d\alpha_k}2-d}(t)\int n_k(\gamma_1(t)Q_1,\dots,\gamma_m(t)Q_m)\\
 &=  \frac 12 \lambda^2(t)\sum_j \gamma_j^2(t)\int |\nabla Q_j|^2 +\frac 12\sum_k \Big(\prod_j \gamma_j^{\beta_{j,k}}(t)\Big)\lambda^{\frac {d\alpha_k}2-d}(t)\int n_k(\mathbf{Q}).
\end{align*}
Differentiating with respect to $t$, we obtain
$$\dfrac{d}{dt}H(\Gamma(t))=\lambda'(t)A(t)+\sum_{j} \gamma_{j}'(t)B_{j}(t)$$
with
\begin{equation}
A(t)=\lambda(t)\sum_j\gamma_j^2(t)\int |\nabla Q_j|^2+ \frac d2\sum_k\Big(\frac{\alpha_k}2-1\Big)\lambda^{\frac{d\alpha_k}2-d-1}(t)\Big(\prod_j\gamma_j^{\beta_{j,k}}(t)\Big)\int n_k(\mathbf{Q})
\end{equation}
and 
\begin{equation}
B_{j_0}(t)=\lambda^2(t)\gamma_{j_0}(t)\int |\nabla Q_{j_0}|^2
+\frac 12\sum_{k}\lambda^{\frac{d\alpha_k}{2}-d}(t)\beta_{j_0,k}\gamma_{j_0}^{\beta_{j_0,k}-1}(t)\Big(\prod_{j\neq j_0}\gamma_j^{\beta_{j,k}}(t)\Big)\int n_k(\mathbf{Q}).
\end{equation}
Since $\mathbf{Q}\neq 0$, we may assume, without loss of generality, that $Q_m\neq 0$. Condition \eqref{ka1} then yields
\begin{equation}
\label{ka2} 
\gamma'_m(t)=-\dfrac{1}{\gamma_m(t)}\sum_{j\neq m}k_j\gamma_j(t)\gamma_j'(t).
\end{equation}
Thus,
\begin{equation}
\label{ind}
\dfrac{d}{dt}H(\Gamma(t))=\lambda'(t)A(t)+\sum_{j< m} \gamma_{j}'(t)\Big(B_{j}(t)-\dfrac{1}{\gamma_m(t)}k_j\gamma_j(t)B_m(t)\Big).
\end{equation}
Now, observe that since $\mathbf{Q}$ is a bound-state, $ \frac{d}{dt}H(\Gamma(t))\big|_{t=0}=0$ independently of the choices of $\lambda,\gamma_1,\dots,\gamma_{m-1}$. Hence, 
\begin{equation}\label{eq:primeiraderivadazero}
A(0)=0\quad\textrm{ and }\quad B_j(0)-k_jB_m(0)=0,
\end{equation}
and we easily deduce the following $m$ independent equalities regarding the bound-state $\mathbf{Q}$:
\begin{Prop}[First integrals]
	\label{propgradientes} Let $\mathbf{Q}=(Q_1,\dots Q_m)$ a real bound-state of system \eqref{sistema}, with $Q_m\neq 0$. Define
	$$
	K=\sum_j k_j=\dfrac{1}{\lambda_m\omega_m\int Q_m^2}M(\mathbf{Q}).
	$$ Then, for any $1\le j\le m$,
	$$
	\int |\nabla Q_j|^2 = \frac{1}{2}\sum_k \left[ -\frac{k_j}{K}\left( d\left(\frac{\alpha_k}{2}-1\right) \sum_{l} (k_l\beta_{m,k}-\beta_{l,k}) \right) + k_j\beta_{m,k} - \beta_{j,k}\right]\int n_k(\mathbf{Q})=: \sum_k \sigma_{j,k}\int n_k(\mathbf{Q})
	$$
	In particular,
\begin{equation}
\label{gradientes1}
\sum_j \int |\nabla Q_j|^2=\sum_{j,k} \sigma_{j,k}\int n_k(\mathbf{Q})=-\dfrac{d}{2}\sum_k\Big(\frac{\alpha_k}{2}-1\Big)\int n_k(\mathbf{Q}).
\end{equation}
\end{Prop}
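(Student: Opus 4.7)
The plan is to leverage the fact that $\mathbf{Q}$ is a critical point of $S$ together with the freedom in the choice of the path $\Gamma$ inside $\mathcal{M}$. Since $\Gamma(t)\in\mathcal{M}$, the mass is preserved along $\Gamma$, so the identity $\frac{d}{dt}S(\Gamma(t))|_{t=0}=\langle S'(\mathbf{Q}),\Psi\rangle=0$ reduces to $\frac{d}{dt}H(\Gamma(t))|_{t=0}=0$. Unfolding this via \eqref{ind} produces
\begin{equation*}
\lambda'(0)A(0)+\sum_{j<m}\gamma_j'(0)\bigl(B_j(0)-k_jB_m(0)\bigr)=0.
\end{equation*}

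Next I would observe that the mass-preservation constraint \eqref{ka1} is a single linear condition which merely determines $\gamma_m'(0)$ through \eqref{ka2}, once $\gamma_1'(0),\dots,\gamma_{m-1}'(0)$ and $\lambda'(0)$ are prescribed. Consequently these $m$ scalar derivatives can be chosen independently by selecting an appropriate smooth curve, and the identity above forces every coefficient to vanish, which is precisely \eqref{eq:primeiraderivadazero}.

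From $A(0)=0$, the formula \eqref{gradientes1} follows immediately by rearrangement. From $B_j(0)-k_jB_m(0)=0$ one obtains, for every $j$ (the case $j=m$ being trivially true since $k_m=1$),
\begin{equation*}
\int|\nabla Q_j|^2=k_j\int|\nabla Q_m|^2+\tfrac12\sum_k(k_j\beta_{m,k}-\beta_{j,k})\int n_k(\mathbf{Q}).
\end{equation*}
Summing over $j$, using $\sum_j k_j=K$, and invoking \eqref{gradientes1} on the left-hand side, I would then solve for $\int|\nabla Q_m|^2$ as an explicit linear combination of the quantities $\int n_k(\mathbf{Q})$. Plugging this value back into the displayed relation yields the announced expression with coefficients $\sigma_{j,k}$.

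The only delicate point is the justification that $\lambda'(0),\gamma_1'(0),\dots,\gamma_{m-1}'(0)$ do vary independently within the codimension-one manifold $\mathcal{M}$; the remainder is straightforward algebraic bookkeeping. As a consistency check, summing the $\sigma_{j,k}$ over $j$ recovers $-\frac{d}{2}(\alpha_k/2-1)$, which reproduces \eqref{gradientes1} and corroborates the coefficient-matching.
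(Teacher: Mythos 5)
Your proof is correct and follows the paper's own route exactly: the vanishing of $\frac{d}{dt}H(\Gamma(t))\big|_{t=0}$ for independently prescribable $\lambda'(0),\gamma_1'(0),\dots,\gamma_{m-1}'(0)$ (with $\gamma_m'(0)$ slaved to them via \eqref{ka2}, which is legitimate since $\int Q_m^2\neq 0$) yields $A(0)=0$ and $B_j(0)-k_jB_m(0)=0$, after which summing over $j$ and eliminating $\int|\nabla Q_m|^2$ is precisely the algebra the paper leaves as ``easily deduced''. Note only that your bookkeeping produces $\sigma_{j,k}=-\frac{k_j}{2K}\bigl(d(\tfrac{\alpha_k}{2}-1)+\sum_l(k_l\beta_{m,k}-\beta_{l,k})\bigr)+\tfrac12(k_j\beta_{m,k}-\beta_{j,k})$, i.e.\ a sum rather than a product inside the large parenthesis, which is the version consistent with \eqref{gradientes1} and \eqref{eq:simplificasigmas}.
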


\begin{Rem}
	It is important to observe that
\begin{equation}\label{eq:simplificasigmas}
	\sigma_{j,k}-k_j\sigma_{m,k}=-\frac{1}{2}(\beta_{j,k}-k_j\beta_{m,k}).
\end{equation}
\end{Rem}
We now compute the second derivative at $t=0$:
$$\frac{d^2}{dt^2}H(\Gamma(t))\Big|_{t=0} =\left[ \lambda''A + \lambda'A' + \sum_{j<m}\gamma_j''\left(B_j - \frac{1}{\gamma_m}k_j\gamma_jB_m\right) + \sum_{j<m}\gamma_j'\left(B_j - \frac{1}{\gamma_m}k_j\gamma_jB_m\right)'\right]\Bigg|_{t=0}.
$$
From \eqref{eq:primeiraderivadazero}, the first and third terms are zero, and so
\begin{align*}
\frac{d^2}{dt^2}H(\Gamma(t))\Big|_{t=0} &=\left[ \lambda'A' + \sum_{j<m}\gamma_j'\left(B_j' - k_j\gamma_j'B_m - k_jB_m' + k_j\gamma_m'B_m\right)  \right]\Bigg|_{t=0} \\&= \left[ \lambda'A' + \sum_{j<m}\gamma_j'\left(B_j' - k_j\gamma_j'B_m - k_jB_m' - k_j\sum_{l< m} k_l\gamma_l' B_m\right)  \right]\Bigg|_{t=0}.
\end{align*}
Thus the second derivative is a quadratic form applied to $(\lambda'(0),\gamma_1'(0),\dots, \gamma_{m-1}'(0))$. To simplify the following exposition, we write $\lambda'$ (resp. $\gamma_{j}'$) instead of $\lambda'(0)$ (resp. $\gamma_{j}'(0)$).
\begin{align*}
A'(0)&=\sum_k \left[ \sum_{j}(\lambda' + 2\gamma_{j}') \sigma_{j,k}  +\frac{d}{2} \left(\frac{\alpha}{2}-1\right)\left(\frac{d\alpha_k}{2}-d-1\right)\lambda' +\frac{d}{2}\sum_{j}\left(\frac{\alpha_k}{2}-1\right)\beta_{j,k}\gamma_j'  \right]\int n_k(\mathbf{Q}).
\end{align*}
For a fixed $j_0<m$,
\begin{align*}
B_{j_0}'(0)&=\sum_k \left[(2\lambda'+\gamma_{j_0}')\sigma_{j_0,k} + \frac{d}{2} \left(\frac{\alpha_k}{2}-1\right)\beta_{j_0,k}\lambda' +\frac{1}{2} \gamma_{j_0}'\beta_{j_0,k}(\beta_{j_0,k}-1) + \frac{1}{2}\beta_{j_0,k}\left(\sum_{j\neq j_0} \beta_{j,k}\gamma_j'\right)\right] \int n_k(\mathbf{Q})\\
&=\sum_k \Bigg[(2\lambda'+\gamma_{j_0}')\sigma_{j_0,k} + \frac{d}{2} \left(\frac{\alpha_k}{2}-1\right)\beta_{j_0,k}\lambda' +\frac{1}{2} \gamma_{j_0}'\beta_{j_0,k}(\beta_{j_0,k}-1) \\&\quad + \frac{1}{2}\beta_{j_0,k}\left(\sum_{j\neq j_0,m} \beta_{j,k}\gamma_j'\right) -\sum_{j<m}\frac{1}{2} \beta_{j_0,k}\beta_{m,k}k_j\gamma_j' \Bigg] \int n_k(\mathbf{Q})
\end{align*}
and
\begin{align*}
B_m'(0)=\left[ \left(2\lambda' - \sum_{j< m}k_j\gamma_j'\right)\sigma_{m,k} + \frac{d}{2}\left(\frac{\alpha_k}{2}-1\right)\beta_{m,k}\lambda'  + \frac{1}{2}\beta_{m,k}\sum_{j<m}\left(\beta_{j,k} -  k_j (\beta_{m,k}-1) \right)\gamma_j'   \right]\int n_k(\mathbf{Q}).
\end{align*}
Writing the quadratic form in terms of a symmetric matrix $A$, we now collect the various entries:
\begin{itemize}
	\item $(\lambda')^2$:
	$$
	a_{0,0}= \sum_k\left[ \sum_{j}\sigma_{j,k} +\frac{d}{2} \left(\frac{\alpha}{2}-1\right)\left(\frac{d\alpha_k}{2}-d-1\right) \right]\int n_k(\mathbf{Q})= \sum_k \frac{d}{2} \left(\frac{\alpha}{2}-1\right)\left(\frac{d\alpha_k}{2}-d-2\right) \int n_k(\mathbf{Q})
	$$
	\item $\lambda'\gamma_{j_0}$: using \eqref{eq:simplificasigmas},
\begin{align*}
	a_{0,j_0}&=\sum_k\left[  2(\sigma_{j,k}-k_j \sigma_{m,k}) + \frac{d}{2}\left(\frac{\alpha_k}{2}-1\right)(\beta_{j,k}-k_j\beta_{m,k}) \right] \int n_k(\mathbf{Q}) \\&= \sum_k\left[  \frac{d}{2}\left(\left(\frac{\alpha_k}{2}-1\right)-1\right)(\beta_{j,k}-k_j\beta_{m,k}) \right] \int n_k(\mathbf{Q})
\end{align*}
	\item $(\gamma_{j_0}')^2$:
	\begin{align*}
	a_{j_0,j_0}& = \sum_k \Bigg[ \sigma_{j_0,k} + \frac{1}{2}(\beta_{j_0,k}-1)\beta_{j_0,k} - \frac{1}{2}k_{j_0}\beta_{m,k}\beta_{j_0,k} - k_{j_0}\left(\sigma_{m,k} + \frac{1}{2}\beta_{m,k}\right)\\& + k_{j_0}^2\sigma_{m,k} + \frac{1}{2}k_{j_0}^2\beta_{m,k}(\beta_{m,k}-1) - \frac{1}{2}k_{j_0}\beta_{m,k}\beta_{j_0,k} - \frac{1}{2}\beta_{m,k}k_{j_0}^2 \Bigg]\int n_k(\mathbf{Q})\\&=\sum_k \left[ \frac{k_{j_0}^2}{2}\beta_{m,k}(\beta_{m,k}-2) - k_j\beta_{m,k}\beta_{j_0,k} + \frac{1}{2}\beta_{j_0,k}(\beta_{j_0,k}-2)\right]\int n_k(\mathbf{Q})
	\end{align*}
	\item $\gamma_{j_0}\gamma_{j_1}$, $j_0\neq j_1$:
		\begin{align*}
	a_{j_0,j_1}& = \sum_k \Bigg[ \frac{1}{2}\beta_{j_0,k}\beta_{j_1,k} - \frac{1}{2}k_{j_1}\beta_{m,k}\beta_{j_0,k} + k_{j_0}k_{j_1}\sigma_{m,k} + \frac{1}{2}k_{j_0}k_{j_1} \\&- \frac{1}{2}k_{j_0}\beta_{m,k}\beta_{j_1,k} - k_{j_0}k_{j_1}\sigma_{m,k} - \frac{1}{2}k_{j_0}k_{j_1}\beta_{m,k} \Bigg]\int n_k(\mathbf{Q})
	\end{align*}
\end{itemize}
Hence the symmetric form is represented by the matrix $A$ given in the statement of Theorem \ref{teo:principal}. Since, by assumption, $A$ has a negative eigenvalue, there exists a nontrivial choice of $(\lambda'(0), \gamma_1'(0), \dots, \gamma_{m-1}'(0))$ such that
$$
\langle S''(\mathbf{Q})\Gamma'(0), \Gamma'(0)\rangle= \frac{d^2}{dt^2}H(\Gamma(t))\Big|_{t=0}<0.
$$
This concludes the proof of Theorem \ref{teo:principal}.\hfill\qed

\begin{proof}[Proof of Proposition \ref{ucoro}]
	Just notice that
	\begin{align*}
a_{0,0}&=\dfrac d2\sum_k\Big(\dfrac{\alpha_k}2-1\Big)\Big(\dfrac{d\alpha_k}2-d-2\Big)\int n_k(\mathbf{Q})\\&\le \dfrac d2\Big(\dfrac{dp}2-d-2\Big)\sum_k\left(\frac{\alpha_k}{2}-1\right)\int n_k(\mathbf{Q})=-\Big(\dfrac{dp}2-d-2\Big)\sum_j\|\nabla Q_j\|_2^2<0
	\end{align*}
	from Proposition \ref{propgradientes}.
\end{proof}

%
\begin{proof}[Proof of Proposition \ref{L2critical}]
	The specific homogeneities of $N$ imply that $a_{0,0}=0$. Furthermore,
	$$
	a_{0,1}=-2(c_1\int Q_1^2 - k_1c_m\int Q_m^2) = -2\left(c_1-\frac{c_m\lambda_1\omega_1}{\lambda_m\omega_m} \right)\int Q_1^2 \neq 0.
	$$
	Hence the principal minor
	$$
	A_1=\left[\begin{array}{cc}
	a_{0,0} & a_{0,1}\\ a_{0,1} & a_{1,1}
	\end{array}\right]
	$$
	has negative determinant and $A$ cannot be semi-positive definite.
\end{proof}
%
\begin{proof}[Proof of Propostion \ref{L2critical2}]
	As in the previous proof, it suffices to check that $a_{0,0}=0$ and $a_{0,1}\neq 0$, which is a direct consequence of the hypothesis.
\end{proof}

%
%

\section{Applications}

Before we begin to study the examples stated in the Introduction, it will be useful to recall some facts regarding synchronous systems. Suppose that
$$N(\mathbf{u})=N_2(\mathbf{u})+N_p(\mathbf{u}),$$
where 
$$N_2(\mathbf{u})=\sum_{j=1}^m c_j \int |u_j|^2, \quad N_p(\mathbf{u})=-\int f(\mathbf{u}) $$
with $f:\ce^m\to \er$, homogeneous of degree $p$, such that $f(X)\le f(|X|)$, for all $X\in \ce^m$. By synchronicity, we mean that
$$
c_j+\lambda_j\omega_j\equiv c+\lambda\omega >0 ,\quad j=1,\dots, m.
$$
Define
$$
f_{max}=\sup_{X\in \mathbb{S}^{m-1}} f(X).
$$
$$
\mathcal{X} = \{ X\in \mathbb{S}^{m-1} :  f(X)=f_{max} \}.
$$
and consider the scalar equation
\begin{equation}\label{eq:bs_escalar}
-(c+\lambda\omega)u + \Delta u + a u^{p-1}=0.
\end{equation}
\begin{Prop}\label{prop:caractgs}
	If $X_0\in \er^m$ is a critical point of $f$ on the sphere and $u$ is a solution of \eqref{eq:bs_escalar} with $a=pf(X_0)/2$, then $X_0u$ is a bound-state of \eqref{sistema}. Furthermore, the set of ground-states is given by
	$$
	G=\left\{ Xq: X\in \mathcal{X},\ q \mbox{ ground-state of \eqref{eq:bs_escalar} with }  a=pf_{max}/2 \right\}.
	$$
\end{Prop}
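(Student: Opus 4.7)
\medskip
\noindent\emph{Proof sketch.} For part (a), I would verify directly that $\mathbf{Q}=X_0 u$ solves $S'(\mathbf{Q})=0$. Since $X_0$ is a critical point of $f$ restricted to $\mathbb{S}^{m-1}$, Lagrange multipliers give $\nabla f(X_0)=\mu X_0$ for some $\mu\in\er$, and Euler's identity $X_0\cdot\nabla f(X_0)=pf(X_0)$ forces $\mu=pf(X_0)$. Because $\nabla f$ is homogeneous of degree $p-1$, one has $\nabla f(X_0u)=u^{p-1}\nabla f(X_0)=pf(X_0)u^{p-1}X_0$. Substituting $\mathbf{Q}=X_0u$ into the $j$-th Euler--Lagrange equation $(c_j+\lambda_j\omega_j)Q_j-\Delta Q_j-\tfrac{1}{2}(\partial_jf)(\mathbf{Q})=0$ and invoking the synchronicity $c_j+\lambda_j\omega_j\equiv c+\lambda\omega$, all $m$ equations collapse to \eqref{eq:bs_escalar} with $a=pf(X_0)/2$.

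For part (b), the strategy is to reduce the action minimization over bound-states of the system to the scalar minimization associated with \eqref{eq:bs_escalar}. Testing $S'(\mathbf{Q})=0$ against $\mathbf{Q}$ and using Euler's identity on $f$ yields the Nehari-type relation
\begin{equation}
(c+\lambda\omega)\sum_j\int Q_j^2+\sum_j\int|\nabla Q_j|^2=\tfrac{p}{2}\int f(\mathbf{Q}),
\end{equation}
and consequently $S(\mathbf{Q})=\tfrac{p-2}{4}\int f(\mathbf{Q})$ for every bound-state. Through the standard projection $\mathbf{u}\mapsto\mu\mathbf{u}$ onto the Nehari manifold, minimizing $S$ is equivalent to minimizing the scale-invariant quotient
\begin{equation}
J(\mathbf{u})=\frac{\bigl[(c+\lambda\omega)\sum_j\int u_j^2+\sum_j\int|\nabla u_j|^2\bigr]^{p/2}}{\int f(\mathbf{u})}
\end{equation}
over nonzero $\mathbf{u}\in(H^1(\er^d))^m$ with $\int f(\mathbf{u})>0$.

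To compute $\inf J$, I would first exploit the hypothesis $f(X)\leq f(|X|)$, understood componentwise, together with the diamagnetic inequality $\bigl|\nabla|u_j|\bigr|\leq|\nabla u_j|$ to replace each $u_j$ by $|u_j|$ without increasing $J$, reducing to real non\-negative $\mathbf{u}$. Writing the polar decomposition $\mathbf{u}(x)=\rho(x)X(x)$ with $\rho=(\sum_j u_j^2)^{1/2}$ and $X(x)\in\mathbb{S}^{m-1}\cap\er^m_+$, the orthogonality $X\cdot\nabla X=0$ gives $\sum_j|\nabla u_j|^2=|\nabla\rho|^2+\rho^2|\nabla X|^2\geq|\nabla\rho|^2$, while $\int f(\mathbf{u})=\int\rho^p f(X)\leq f_{\max}\int\rho^p$. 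Combining these two bounds,
\begin{equation}
J(\mathbf{u})\geq\frac{1}{f_{\max}}\cdot\frac{\bigl[(c+\lambda\omega)\int\rho^2+\int|\nabla\rho|^2\bigr]^{p/2}}{\int\rho^p},
\end{equation}
whose right-hand side is $f_{\max}^{-1}$ times the scalar Sobolev-type quotient for \eqref{eq:bs_escalar} with $a=pf_{\max}/2$, whose minimum is attained by the scalar ground-state $q$. The chain is saturated by the ansatz $X_\ast q$ for any $X_\ast\in\mathcal{X}\cap\er^m_+$ (nonempty, since $f(X)\leq f(|X|)$ forces $|X_\ast|\in\mathcal{X}$ whenever $X_\ast\in\mathcal{X}$), which is a genuine bound-state by part (a); this pins down $\inf J$ and the common ground-state action.

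The main obstacle is the equality case, needed to conclude that \emph{every} ground-state has the form $Xq$ with $X\in\mathcal{X}$. Saturating the two inequalities in the chain forces $X(x)\in\mathcal{X}$ almost everywhere on $\{\rho>0\}$ and $\rho^2|\nabla X|^2\equiv 0$; positivity and smoothness of the scalar ground-state make $\{\rho>0\}=\er^d$ connected, so $X$ is a single constant direction in $\mathcal{X}$, and the uniqueness (up to translation) of the scalar ground-state of \eqref{eq:bs_escalar} identifies $\rho$ with $q$. To include bound-states with sign-changing or complex-valued components, one traces back the reduction step $u_j\mapsto|u_j|$: equality in $f(X)\le f(|X|)$ and in the diamagnetic inequality characterizes the admissible phases, showing that every $X\in\mathcal{X}$ (not only those in the positive orthant) arises, and giving the announced identification of $G$.
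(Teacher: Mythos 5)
Your proof is correct and follows essentially the same route as the paper: for the first claim the paper likewise uses the Lagrange multiplier relation $\nabla f(X_0)=\gamma X_0$ together with Euler's identity to get $\gamma=pf(X_0)$ and then substitutes $X_0u$ into the synchronous elliptic system. For the characterization of $G$ the paper simply defers to the cited reference \cite{Correia}; your Nehari-quotient reduction with the polar decomposition $\mathbf{u}=\rho X$, the bounds $\sum_j|\nabla u_j|^2\ge|\nabla\rho|^2$ and $\int f(\mathbf{u})\le f_{\max}\int\rho^p$, and the equality-case analysis is precisely the standard argument that reference carries out, so you have in fact supplied more detail than the paper itself.
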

\begin{proof}
Observe that, for some $\gamma\in \er$, $\nabla f(X_0)=\gamma X_0$. Since $f$ is homogeneous of degree $p$,
$$
pf(X_0) = \nabla f(X_0)\cdot X_0 = \gamma |X_0|^2 = \gamma.
$$
One may now check that $X_0u$ satisfies the elliptic system for the bound-states. The characterization of $G$ follows the exact same argument as in \cite{Correia}.
\end{proof}

\subsection{Quadratic Schrödinger system I}
We recall \eqref{quadeq}:
	\begin{displaymath}
	\left\{\begin{array}{llll}
	iu_t+\Delta u+\overline{u}v=0\\
	i\sigma v_t+\Delta v-\beta v+\dfrac 12 u^2=0,\quad (x,t)\in \er^d\times \er.
	\end{array}\right.
	\end{displaymath}	
	Here,
	$$N(u,v)= \beta\int |v|^2-Re\int \overline{u}^2v,$$
	$$n_{1}=\beta |v|^2\quad (\beta_{1,1}=0,\,\beta_{2,1}=2),\quad n_{2}=-Re \beta \overline{u}^2v\quad (\beta_{1,2}=2,\,\beta_ {2,2}=2)$$
	and
	$$M(u,v)=\int |u|^2+2\sigma\int |v|^2,\quad k_1=\frac{\int Q_1^2}{2\sigma\int Q_2^2}.$$ 
	The associated matrix defined in Theorem \ref{teo:principal} is
	\begin{displaymath}
	A=\left[\begin{array}{ccc}
	\displaystyle\dfrac d8\Big(4-d\Big)\int Q_1^2Q_2&\displaystyle 2k_1\beta\int Q_2^2+\frac 14(d-4)(k-2)\int Q_1^2Q_2\\
	&\\
	\displaystyle 2k_1\beta\int Q_2^2+\frac 14(d-4)(k_1-2)\int Q_1^2Q_2&\displaystyle\frac{k_1}2(k_1+4)\int Q_1^2Q_2
	\end{array}\right]
	\end{displaymath}
	
	\begin{proof}[Proof of Proposition \ref{prop:subL2quad2D}]
	For a given frequency $\omega>0$, one may look for bound-states of the form $(u,v)=(e^{i\omega t}P, e^{2i\sigma\omega t}Q)$. The corresponding stationary system is
	\begin{displaymath}
	\left\{\begin{array}{llll}
	-\omega P+\Delta P+\overline{P}Q=0\\
	-(2\omega\sigma+\beta) Q+\Delta Q+\dfrac 12 P^2=0
	\end{array}\right.
	\end{displaymath}	
	Observe that, if $\beta=\omega(1-2\sigma)$, the system is synchronous. Thus, by Proposition \ref{prop:caractgs}, the ground-state can be computed explicitly:
	%
	$$
	(P,Q)= \left(\sqrt{\frac{2}{3}}q,  \frac{1}{\sqrt{3}}q\right),
	$$
	where $q$ is the ground-state of
	$$
	-\omega q+\Delta q + \frac{1}{\sqrt{3}}q^2=0.
	$$
	Consequently,
	$$
	k_1=\frac{1}{\sigma}, \quad k_1\beta=\frac{1-2\sigma}{\sigma}\omega, \quad \int Q_1^2Q_2 = \frac{2}{3\sqrt{3}}\int q^3,\quad \int Q_2^2 = \frac{1}{3} \int q^2.
	$$
	Furthermore, one can see (cf. \cite[Corollary 8.1.3]{cazenave}) that
	$$
	\int q^2 = \frac{6-d}{6\sqrt{3}\omega}\int q^3.
	$$
	Since $a_{0,0}>0$ in all $L^2$-subcritical cases, the condition for instability reduces to $\det(A)<0$:
	$$
	\frac{d(4-d)(1+4\sigma)}{108\sigma^2} - \left( \frac{(1-2\sigma)(6-d)}{9\sqrt{3}\sigma} + \frac{(d-4)(1-2\sigma)}{6\sqrt{3}\sigma}  \right)^2<0,\mbox{ that is, }(2\sigma-1)^2>3(4-d). 
	$$
	Since $\sigma>0$, we observe orbital instability when $\sigma>\sigma_0$.
	\end{proof}
\subsection{Quadratic Schrödinger system II}

	\begin{equation}
	\label{ademir}
	\left\{\begin{array}{llll}
	3iu_t+\Delta u-\beta_1u=-vw\\
	2iv_t+\Delta v-\beta v=-\dfrac 12w^2-u\overline{w}\\
	iw_t+\Delta w-w=-v\overline{w}-u\overline{v},\quad (x,t)\in \er\times \er.
	\end{array}\right.
	\end{equation}

	\noindent
	In this case, $(\omega_1,\omega_2,\omega_3)=(3,2,1)$, $(\lambda_1,\lambda_2,\lambda_3)=(3,2,1)$ and
	$$N(u,v,w)=\beta_1\int |u|^2+\beta\int |v|^2+ \int |w|^2-Re\int \overline{w}^2v-2\int \overline{u}vw.$$
	
	\noindent	
	The synchronicity is obtained for $\beta_1=-7$ and $\beta=-2$, and, in this situation,
	$$N_2(u,v,w)=-7\int |u|^2-2\int |v|^2+\int |w|^2\textrm{ and }N_3(u,v,w)=-Re\int \overline{w}^2v-2\int \overline{u}vw.$$
	\begin{proof}[Proof of Proposition \ref{prop:subL2quad3D}]
We apply Proposition \ref{prop:caractgs}: setting $f(x,y,z)=  yz^2+2xyz,$ elementary computations show that the maximum is $f_{max}=(\sqrt{3}+\sqrt{5})/9$ and that it is achieved at $$(a,b,c)=\left(\sqrt{\frac 1{15}(5-\sqrt{5})},\dfrac 1{\sqrt{3}},\sqrt{\frac 1{15}(5+\sqrt{5})}\right)\quad \mbox{and }(-a,b,-c).$$
	Hence the set of ground-states is given by
	$$
	G=\{(aQ,bQ,cQ), (-aQ,bQ,-cQ)\} \mbox{ modulo translations and rotations,}
	$$
	where $Q$ satisfies
	\begin{equation}
	\label{q1}
	-2Q+\Delta Q+3f_{max}Q^2=0.
	\end{equation}
For either ground-state, the entries of matrix $A$ read
	$$a_{0,0}=\dfrac {3}4f_{max}\int Q^3;\quad a_{1,1}=abc(k_1+1)^2\int Q^3;\quad a_{2,2}=bc\Big(\Big(2k_2+\dfrac12\Big)c+(k_2+1)^2a\Big)\int Q^3,$$
	\begin{displaymath}
	\begin{array}{llll}
	a_{0,1}&=&(14a^2+2k_1c^2)\int Q^2-\dfrac34(2k_1bc^2+(k_1-1)abc)\int Q^3\\
	a_{0,2}&=&(4b^2+2k_2c^2)\int Q^2-\dfrac 34\Big((2k_2-1)bc^2+2(k_2-1)abc\Big)\int Q^3\\
	a_{1,2}&=&\Big(bc^2k_1+abc(k_1+k_2+k_1k_2-1)\Big)\int Q^3.
	\end{array}
	\end{displaymath}
%
%
	Once again, by \cite[Corollary 8.1.3]{cazenave}, 
	$$\displaystyle \int Q^2=\dfrac 54f_{max}\int Q^3.$$
	Finally, since $k_1=\dfrac{9a^2}{c^2}$ and $k_2=4\dfrac{b^2}{c^2}$, putting $a_{i,j}=\tilde{a}_{i,j}\int Q^3$,
	\begin{displaymath}
	\begin{array}{lllll}
	\tilde{a}_{0,0}&=&\dfrac 34f_{max}\\
	\tilde{a}_{1,1}&=&\dfrac{ab(9a^2+c^2)^2}{c^3}\\
	\tilde{a}_{2,2}&=&f_{max}+\dfrac{b(4b^2+c^2)^2}{c^3}\\
	\tilde{a}_{0,1}&=&\dfrac{115}4a^2f_{max}-\dfrac{15}{16}f_{max}\Big(18a^2b+\dfrac{ab(9a^2+c^2)}{c}\Big)\\
	\tilde{a}_{0,2}&=&15b^2f_{max}-\dfrac34\Big(8b^3-bc^2+2\dfrac{ab(4b^2-c^2)}{c}\Big)\\
	\tilde{a}_{1,2}&=&9a^2b+\dfrac{ab}{c}\Big(9a^2+4b^2-c^2+\dfrac{36a^2b^2}{c^2}\Big).
	\end{array}
	\end{displaymath}
	With these values, we get $\det(A)<0$, and therefore both ground-states are orbitally unstable. 
\end{proof}
\appendix

\section{Invariants, Virial identities and blow-up}

In this section, we formally deduce some identities regarding \eqref{sistema} using the Hamiltonian structure. Suppose that one wishes to understand the evolution of a functional $G$ through the trajectories of \eqref{sistema}:
$$
\frac{d}{dt} G(\mathbf{u}(t)) = \langle G'(\mathbf{u}(t)), \mathbf{u}_t\rangle = \langle G'(\mathbf{u}(t)), JH'(\mathbf{u}(t))\rangle =: P(\mathbf{u}(t)).
$$
Consider the Hamiltonian system generated by $G$,
\begin{equation}\label{eq:aux}
\mathbf{v}_t = JG'(\mathbf{v}).
\end{equation}
and prescribe an initial condition $\mathbf{v}_0$. Then
$$
\frac{d}{dt} H(\mathbf{v}(t)) = \langle H'(\mathbf{v}(t)), \mathbf{v}_t\rangle = \langle H'(\mathbf{v}(t)), JG'(\mathbf{v}(t))\rangle = - \langle G'(\mathbf{v}(t)), JH'(\mathbf{v}(t))\rangle = -P(\mathbf{v}(t)).
$$ 
Taking $t=0$, we obtain an alternative definition for $P$:
$$
P(\mathbf{v}_0)=-\frac{d}{dt}H(\mathbf{v}(t))\Big|_{t=0}.
$$
Therefore, the variation of $G$ along the trajectories generated by $H$ is symmetric to the variation of $H$ along the trajectories generated by $G$ at the same state. This duality corresponds to the symmetry of the 
Poisson bracket in Hamiltonian mechanics. The advantage of this formulation is that the dynamical system \eqref{eq:aux} is usually explicitly solvable and the computation becomes trivial.

\begin{Prop}[Conservation of mass and energy]Regarding the flow generated by \eqref{sistema},
	\begin{enumerate}
		\item the Hamiltonian $H$ is conserved; 
		\item  the mass
		$$M(\mathbf{u}(t))=\frac 12 \sum_j\int  \lambda_j\omega_j|u_j(t)|^2$$
		is conserved.
	\end{enumerate}
\end{Prop}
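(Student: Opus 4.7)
The plan is to apply the duality formula
\[
P(\mathbf{v}_0)=-\frac{d}{dt}H(\mathbf{v}(t))\Big|_{t=0}, \qquad \mathbf{v}_t=JG'(\mathbf{v}),\ \mathbf{v}(0)=\mathbf{v}_0,
\]
derived in the paragraph preceding the statement, with two different choices of the auxiliary functional $G$.

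For the conservation of the Hamiltonian, I take $G=H$. The auxiliary flow coincides with \eqref{sistema}, so the duality formula gives
\[
\frac{d}{dt}H(\mathbf{u}(t))=P(\mathbf{u}(t))=\langle H'(\mathbf{u}(t)),JH'(\mathbf{u}(t))\rangle.
\]
The key observation is that $J=\mathrm{diag}(1/(i\lambda_j))$ has purely imaginary entries (since $\lambda_j\in\er$), hence $\overline{1/(i\lambda_j)}=-1/(i\lambda_j)$. With the real pairing $\langle f,g\rangle=\partere\int f\bar g$ one checks that for every $\mathbf{w}=(w_j)$,
\[
\langle \mathbf{w},J\mathbf{w}\rangle=-\sum_j \frac{1}{\lambda_j}\,\mathrm{Im}\!\int |w_j|^2=0,
\]
so $J$ is skew-adjoint and $P(\mathbf{u})\equiv 0$, proving that $H$ is conserved.

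For the conservation of the mass, I take $G=M$. A direct computation of the gradient of $M(\mathbf{u})=\tfrac12\sum_j\int\lambda_j\omega_j|u_j|^2$ yields $M'(\mathbf{u})=(\lambda_j\omega_j u_j)_j$, and therefore
\[
JM'(\mathbf{u})=\Big(\frac{\omega_j}{i}u_j\Big)_j=-i\omega\mathbf{u}.
\]
Consequently the auxiliary Hamiltonian flow generated by $M$ is the gauge action $\mathbf{v}(t)=e^{-i\omega t}\mathbf{v}_0$, which is explicit. Invoking the gauge invariance \eqref{invmass} of the Hamiltonian,
\[
H(\mathbf{v}(t))=H(e^{-i\omega t}\mathbf{v}_0)=H(\mathbf{v}_0)\qquad \text{for all } t\in\er,
\]
so $\frac{d}{dt}H(\mathbf{v}(t))|_{t=0}=0$ and the duality formula gives $P(\mathbf{v}_0)=0$ for every $\mathbf{v}_0$. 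Taking $\mathbf{v}_0=\mathbf{u}(t)$ along a trajectory of \eqref{sistema} shows that $\frac{d}{dt}M(\mathbf{u}(t))=P(\mathbf{u}(t))=0$, i.e.\ $M$ is conserved.

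There is no real obstacle here: the only subtle point is the bookkeeping between the complex structure of $J$ and the real $L^2$-pairing used to define gradients, but once this is settled the first assertion is just skew-adjointness of $J$ and the second is exactly the content of \eqref{invmass}, since the Hamiltonian flow associated with $M$ is by construction the one-parameter group of gauge transformations.
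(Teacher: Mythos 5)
Your proof is correct and follows essentially the same route as the paper: conservation of $H$ via the skew-adjointness of $J$ (the paper phrases this as $P=-P$ from the duality formula), and conservation of $M$ by explicitly solving the auxiliary flow $\mathbf{v}_t=JM'(\mathbf{v})$, recognizing it as the gauge action, and invoking \eqref{invmass}.
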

\begin{proof}
	The conservation of $H$ is trivial: one takes $G=H$ and obtains $P=-P$. For the conservation of mass, observe that the dynamical system generated by $M$ is
	$$
	(v_j)_t = -i\omega_j v_j,\quad j=1,\dots,m.
	$$
	The solution of the IVP $\mathbf{v}(0)=\mathbf{v}_0$ is
	$$
	v_j(t) = e^{-i\omega_jt}(v_0)_j,\quad t\in \er, j=1,\dots,m.
	$$
	The invariance \eqref{invmass} implies that $H$ is constant along these trajectories, and thus $P\equiv 0$.
\end{proof}
%
%

\begin{Prop}[Virial identities]
	Define
	$$
	V(\mathbf{u})=\frac{1}{2} \sum_j \int \lambda_j\omega_j|x|^2|u|^2 dx.
	$$
	Define the space
	$$
	\Sigma=\left\{ \mathbf{u}\in (H^1(\er^d))^m: V(\mathbf{u})<\infty \right\},
	$$
	equipped with the natural norm. Given $\mathbf{u}_0\in \Sigma$, the corresponding solution $\mathbf{u}$ of \eqref{sistema} belongs to $C([0,T(\mathbf{u}_0)), \Sigma)$ and
	\begin{equation}\label{eq:virialprim}
	\frac{d}{dt}V(\mathbf{u}(t))= 2\Im \sum_{j} \int \omega_j \overline{u_j(t)}x\cdot \nabla u_j(t) dx.
	\end{equation}
	Furthermore, if $\omega_j/\lambda_j\equiv \omega_0/\lambda_0$,
	\begin{equation}\label{eq:virialseg}
	\frac{d^2}{dt^2}V(\mathbf{u}(t))= \frac{8\omega_0}{\lambda_0}H(\mathbf{u}_0) - \frac{\omega_0}{\lambda_0}\sum_k (4-N(\alpha_k-2))\int n_k(\mathbf{u}(t)).
	\end{equation}
\end{Prop}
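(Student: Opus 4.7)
My plan is to apply the Poisson-bracket duality set up at the beginning of Appendix A: for any smooth real functional $G$, $\frac{d}{dt}G(\mathbf{u}(t)) = -\frac{d}{ds}H(\mathbf{v}(s))|_{s=0}$ where $\mathbf{v}$ solves $\mathbf{v}_s = JG'(\mathbf{v})$ with $\mathbf{v}(0) = \mathbf{u}(t)$. Both Virial identities then reduce to computing $H$ along an explicit auxiliary flow. Before invoking the duality, one needs to justify that $\mathbf{u}(t)\in\Sigma$ continuously and that one may legitimately differentiate the weighted integral $V(\mathbf{u}(t))$; I would handle this along the classical lines of \cite[Ch.~6]{cazenave} by truncating $|x|^2$ to a compactly supported weight, deriving the identities for the truncated functional (where every manipulation is elementary), and passing to the limit using Assumption~1 together with the conservation of $M$ and $H$.

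For \eqref{eq:virialprim}, take $G=V$. Since $(V'(\mathbf{v}))_j = \lambda_j\omega_j|x|^2 v_j$, the system $\mathbf{v}_s = JV'(\mathbf{v})$ decouples into the pointwise ODEs $v_{j,s} = -i\omega_j|x|^2 v_j$ with flow $v_j(s,x) = e^{-i\omega_j |x|^2 s}u_j(x)$. At each fixed $x$, this is exactly the gauge transformation \eqref{invmass} evaluated with parameter $-|x|^2 s$, so each $n_k(\mathbf{v}(s,\cdot))$ is invariant in $s$ and the nonlinear part of $H$ drops out of the computation. Using the identity $\nabla v_j = e^{-i\omega_j|x|^2 s}(\nabla u_j - 2i\omega_j s\,x\,u_j)$ and differentiating $|\nabla v_j|^2$ at $s=0$ gives
$$\frac{d}{ds}H(\mathbf{v}(s))\Big|_{s=0} = -2\sum_j \omega_j\,\mathrm{Im}\int \bar u_j\,x\cdot\nabla u_j\,dx,$$
and the sign flip from the duality produces \eqref{eq:virialprim}.

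For \eqref{eq:virialseg}, I would apply the duality a second time with $G = W(\mathbf{u}) := 2\,\mathrm{Im}\sum_j \int \omega_j \bar u_j\,x\cdot\nabla u_j$. A Wirtinger-derivative computation (integrating by parts the term containing $\nabla\delta\bar v_j$) yields $(W'(\mathbf{v}))_j = -2i\omega_j(d\,v_j + 2\,x\cdot\nabla v_j)$. In the synchronous regime $\omega_j/\lambda_j \equiv \omega_0/\lambda_0$, the factor $-\frac{4\omega_0}{\lambda_0}$ becomes common to all components, so $\mathbf{v}_s = JW'(\mathbf{v})$ reduces to the single scaling equation $v_{j,s} = -\frac{4\omega_0}{\lambda_0}\bigl(\tfrac{d}{2}v_j + x\cdot\nabla v_j\bigr)$, which is solved by the $L^2$-invariant dilation $v_j(s,x) = \mu(s)^{d/2}u_j(\mu(s)x)$ with $\mu(s) = e^{-4\omega_0 s/\lambda_0}$. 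Plugging in the elementary scaling relations
$$\int |\nabla v_j(s)|^2 = \mu(s)^2 \int|\nabla u_j|^2,\qquad \int n_k(\mathbf{v}(s)) = \mu(s)^{d\alpha_k/2-d}\int n_k(\mathbf{u}),$$
differentiating $H(\mathbf{v}(s))$ at $s=0$, and then eliminating the kinetic term via $\sum_j \int|\nabla u_j|^2 = 2H(\mathbf{u}) - \sum_k \int n_k(\mathbf{u})$ combined with the conservation of $H$ yields \eqref{eq:virialseg}.

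The main obstacle is the rigorous $\Sigma$-preservation and the commutation of $\frac{d}{dt}$ with the weighted integrals defining $V$, which is exactly why Appendix~A presents this as a formal calculation; beyond that, the role of the synchronicity hypothesis is precisely to collapse the multi-component flow generated by $W$ into the single scaling parameter $\mu(s)$, without which the auxiliary dilation would mix components differently and the monomial factor $\mu^{d\alpha_k/2 - d}$ for the nonlinear terms would not arise.
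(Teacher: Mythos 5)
Your proposal is correct and follows essentially the same route as the paper: the Poisson-bracket duality of Appendix A applied first to the gauge flow $v_j=e^{-i\omega_j|x|^2s}u_j$ (with the pointwise gauge invariance killing the nonlinear terms) and then to the $L^2$-invariant dilation $v_j=\mu(s)^{d/2}u_j(\mu(s)x)$, $\mu(s)=e^{-4\omega_0 s/\lambda_0}$, followed by elimination of the kinetic term via the definition and conservation of $H$. The only additions beyond the paper's (deliberately formal) computation are your explicit derivation of the generator $W'$ and the sketched truncation argument for $\Sigma$-preservation, both of which are consistent with the reference to \cite[Chapter 6]{cazenave}.
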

\begin{proof}
	We present only the formal computation.
	The rigorous justification of $\mathbf{u}\in C([0,T(u_0), \Sigma)$ follows from the same arguments as for the single (NLS) equation (see, for example, \cite[Chapter 6]{cazenave}).
	The solution of the initial value problem
	$$
	\mathbf{v}_t=JV'(\mathbf{v}), \quad \mathbf{v}(0)=\mathbf{v}_0
	$$
	is $$
	v_j(t)=e^{-i\omega_j|x|^2t}(v_0)_j, \quad j=1,\dots, m.
	$$
	The invariance \eqref{invmass} implies that
	$$
	\frac{d}{dt}H(\mathbf{v}(t))=\frac{1}{2}\frac{d}{dt}\sum_j \int |\nabla (v_0)_j - 2i\omega_jt x (v_0)_j|^2 dx
	$$
	The identity \eqref{eq:virialprim} follows from the evaluation at $t=0$.
	\vskip10pt
	For the second derivative, one must solve the auxiliary dynamical system associated with the right-hand side of \eqref{eq:virialprim},
	$$
	(v_j)_t = \frac{4\omega_j}{\lambda_j}x\cdot \nabla v_j - \frac{2d\omega_j}{\lambda_j}v_j, \ v_j(0)=(v_0)_j,\quad j=1,\dots, m.
	$$
	Since $\omega_j/\lambda_j=\omega_0/\lambda_0$, the explicit solution is given by
	$$
	v_j(t)=e^{-\frac{2d\omega_0}{\lambda_0}t}(v_0)_j( e^{-\frac{4\omega_0}{\lambda_0}t}x),\quad j=1,\dots, m.
	$$
	Hence, using a change of variables,
	$$
	H(\mathbf{v}(t))= \frac{1}{2}\sum_j\int e^{-\frac{8\omega_0}{\lambda_0}t} |\nabla (v_0)_j|^2 dx +\frac{1}{2} \sum_k \int e^{(4-2\alpha_k)\frac{d\omega_0}{\lambda_0}t}n_k(\mathbf{v}_0)dx
	$$
	Differentiating in time and taking $t=0$, one arrives at the identity \eqref{eq:virialseg}.
\end{proof}

\begin{nota}
	If $\mathbf{u}=e^{i\omega t}\mathbf{Q}$ is a bound-state, then one obtains the Pohozaev identity
	$$
	0 = \frac{\lambda_0}{\omega_0}\frac{d^2V(\mathbf{u}(t))}{dt^2} = H(\mathbf{Q}) - \sum_k (4-d(\alpha_k-2))\int n_k(\mathbf{Q}).
	$$
	If $\alpha_k=2+4/d$ for all $k$, one sees that $H(\mathbf{Q})=0$.
\end{nota}

As a direct consequence from the Virial identities, one has the standard blow-up result using Glassey's argument:
\begin{Prop}[Blow-up]
	Suppose that $\omega_j/\lambda_j=\omega_0/\lambda_0$ and that
	\begin{itemize}
		\item $n_k\le 0$ for $\alpha_k>2+4/d$;
		\item $n_k\ge 0$ for $\alpha_k<2+4/d$.
	\end{itemize}
	Then any initial data $\mathbf{u}_0\in \Sigma$ with negative energy gives rise to a solution which blows-up in finite-time. In particular, if $\alpha_k=2+4/d$ for all $k$, any bound-state is orbitally unstable.
\end{Prop}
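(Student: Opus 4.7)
The plan is to run a standard Glassey-type virial argument for the first assertion, and then to reduce the $L^2$-critical orbital instability to the blow-up result by a simple scaling perturbation. The entire machinery is already packaged in the preceding proposition, so the work consists of checking signs and constructing a suitable perturbation.

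First I would analyze the identity \eqref{eq:virialseg}. Under the sign hypotheses, I claim every $k$-summand in the right-hand side is non-positive. Indeed, if $\alpha_k>2+4/d$ then $4-d(\alpha_k-2)<0$ while $n_k\le 0$, so the product is non-negative and, with the outer minus sign, contributes non-positively; the case $\alpha_k<2+4/d$ is symmetric; and $\alpha_k=2+4/d$ contributes nothing. Since $\lambda_j\omega_j>0$ and $\omega_j/\lambda_j\equiv\omega_0/\lambda_0$, we have $\omega_0/\lambda_0>0$, whence
$$\frac{d^2}{dt^2}V(\mathbf{u}(t))\le \frac{8\omega_0}{\lambda_0}H(\mathbf{u}_0)=:-2C<0$$
using conservation of $H$ and the hypothesis $H(\mathbf{u}_0)<0$. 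Integrating twice yields $V(\mathbf{u}(t))\le V(\mathbf{u}_0)+V'(\mathbf{u}_0)t-Ct^2$; but $V\ge 0$ (again from $\lambda_j\omega_j>0$), so $\mathbf{u}(t)$ cannot persist past the time at which this parabola vanishes. Combined with the blow-up alternative of Assumption 1, this forces $\|\mathbf{u}(t)\|_{(H^1(\er^d))^m}\to+\infty$ in finite time.

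For the $L^2$-critical statement, fix a bound-state $\mathbf{Q}$ with $\alpha_k\equiv 2+4/d$. The Pohozaev identity from the preceding remark gives $H(\mathbf{Q})=0$, and Proposition \ref{propgradientes} then yields
$$\sum_k\int n_k(\mathbf{Q})=-\sum_j\int|\nabla Q_j|^2<0.$$
Consider the perturbed datum $\mathbf{u}_0^\varepsilon=(1+\varepsilon)\mathbf{Q}$ for $\varepsilon>0$ small. Standard exponential decay of bound-states of elliptic systems shows $\mathbf{Q}\in\Sigma$, hence $\mathbf{u}_0^\varepsilon\in\Sigma$. Using homogeneity of each $n_k$ of degree $2+4/d$ and $H(\mathbf{Q})=0$,
$$H(\mathbf{u}_0^\varepsilon)=\tfrac{1}{2}\bigl((1+\varepsilon)^{2+4/d}-(1+\varepsilon)^2\bigr)\sum_k\int n_k(\mathbf{Q})<0.$$
By the first part, the corresponding solution blows up in finite time. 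Since $\|\mathbf{u}_0^\varepsilon-\mathbf{Q}\|_{(H^1)^m}=\varepsilon\|\mathbf{Q}\|_{(H^1)^m}\to 0$ and the orbit $\mathcal{O}_\mathbf{Q}$ is bounded in $(H^1(\er^d))^m$, the blow-up trajectory must exit every fixed tubular neighborhood of $\mathcal{O}_\mathbf{Q}$ in finite time, which is precisely orbital instability.

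The main subtlety is the persistence $\mathbf{u}(t)\in\Sigma$ throughout the lifespan: the virial computation is only formal without it. This is however asserted in the statement of the Virial proposition and follows from adaptations of the classical weighted-$L^2$ estimates for scalar NLS (as in \cite[Chapter 6]{cazenave}); thus it is not a new obstacle but only a bookkeeping issue. Everything else reduces to the sign check above and the one-line perturbation argument.
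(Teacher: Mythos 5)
Your argument is correct and follows exactly the route the paper intends: the paper states this proposition without proof, as ``a direct consequence of the Virial identities\ldots using Glassey's argument,'' and your sign check on \eqref{eq:virialseg} together with the concavity of $V$ is precisely that argument. The reduction of the $L^2$-critical instability to blow-up via the scaled datum $(1+\varepsilon)\mathbf{Q}$, using $H(\mathbf{Q})=0$ from the Pohozaev remark, is the classical completion of this step and is consistent with the paper's framework.
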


\begin{nota}
	Even in the scalar case, the instability of bound-states through a blow-up argument does not work when there are terms in the Hamiltonian which are not $L^2$-critical. In the $L^2$-supercritical case, one may still obtain instability for ground-states, by proving that they are the minimizers of a certain minimization problem. For general nonlinearities and/or bound-states, instability by blow-up remains a challenging open problem.
\end{nota}

\section{Acknowledgements}
S. Correia was partially supported by Funda\c{c}\~ao para a Ci\^encia e Tecnologia, through the grant UID/MAT/04561/2019. J. Drumond Silva was partially supported by Funda\c{c}\~ao para a Ci\^encia e Tecnologia, through the grant UID/MAT/04459/2019. F. Oliveira was partially supported by Funda\c{c}\~ao para a Ci\^encia e Tecnologia, through the grant UID/MULTI/00491/2019.

\bibliography{Biblioteca}
\bibliographystyle{plain}
\begin{center}
	{\scshape Simão Correia}\\
	{\footnotesize
		Centro de Matemática, Aplicações Fundamentais e Investigação Operacional,\\
		Department of Mathematics,\\
		Instituto Superior T\'ecnico, Universidade de Lisboa\\
		Av. Rovisco Pais, 1049-001 Lisboa, Portugal\\
		simao.f.correia@tecnico.ulisboa.pt
	}
\vskip15pt
	{\scshape Filipe Oliveira}\\
{\footnotesize
	Mathematics Department and CEMAPRE\\
	ISEG, Universidade de Lisboa,\\
	Rua do Quelhas 6, 1200-781 Lisboa, Portugal\\
	foliveira@iseg.ulisboa.pt
}
\vskip15pt
	
	{\scshape Jorge D. Silva}\\
	{\footnotesize
		Center for Mathematical Analysis, Geometry and Dynamical Systems,\\
		Department of Mathematics,\\
		Instituto Superior T\'ecnico, Universidade de Lisboa\\
		Av. Rovisco Pais, 1049-001 Lisboa, Portugal\\
		jsilva@math.tecnico.ulisboa.pt
	}

\end{center}

%
%
\end{document}